\documentclass[12pt]{amsart}
\usepackage{verbatim,amscd,amssymb}
\usepackage{graphicx,tcolorbox}

%\usepackage[color, notref, notcite]{showkeys}     % refs and labels
%\definecolor{refkey}{gray}{.5}   % graylevel for refs
%\definecolor{labeled}{gray}{.5} % graylevel for labels
\usepackage[active]{srcltx}

\newtheorem{thm}{Theorem}[section]
\newtheorem{prop}[thm]{Proposition}
\newtheorem{lem}[thm]{Lemma}

\newtheorem{cor}[thm]{Corollary}

\newtheorem*{thmain}{Main Theorem}

\theoremstyle{definition}
\newtheorem{dfn}[thm]{Definition}
\newtheorem{rem}[thm]{Remark}

\def\C{\mathbb{C}}   

\def\R{\mathbb{R}}

\def\0{\emptyset}
%\def\D{\mathbb{D}}
%%%%%%%%%%%%%%%%%%%%%%%%%%%%% Mathcal %%%%%%%%%%%%%%%%%%%%%%%%%%%%%%%%%

%%%%%%%%%%%%%%%%%%%%%%%%%%%%%%%%%%%%%%%%%%%%%%%%%%%%%%%%%%%%%%%%%%%%%%%

%%%%%%%%%%%%%%%%%%%%%%%%%%%% Mathfrak %%%%%%%%%%%%%%%%%%%%%%%%%%%%%%%%%

%%%%%%%%%%%%%%%%%%%%%%%%%%%% Mathbb %%%%%%%%%%%%%%%%%%%%%%%%%%%%%%%%%%%
\def\Z{\mathbb{Z}}
\renewcommand\emptyset{\varnothing}
\newcommand{\sm}{\setminus}

%%%%%%%%%%%%%%%%%%%%% Abbreviations %%%%%%%%%%%%%%%%%%%%%%%%%%
\def\eps{\varepsilon}
\def\ol{\overline}

\def\si{\sigma}  \def\ta{\theta}  \def\Ga{\Gamma}
\def\al{\alpha}  \def\be{\beta}  \def\la{\lambda} \def\ga{\gamma}

\def\vp{\varphi}
\def\le{\leqslant}
\def\ge{\geqslant}
%%%%%%%%%%%%%%%%%%%%%%%%%%%%%%%%%%%%%%%%%%%%%%%%%%%%%%%%%%%%%%

\def\imp{\mathrm{Imp}}

\def\uc{\mathbb{S}^1}
\def\thu{\mathrm{Th}}
\def\bd{\mathrm{Bd}}

\def\fix{\mathrm{Fix}}

\newcommand{\ind}{\mathrm{Ind}}

\newcommand{\hp}{\widehat P}
\def\disk{\mathbb{D}}
\def\cdisk{\overline{\mathbb{D}}}

\newcommand{\diam}{\mathrm{diam}}

\begin{document}
%\date{January 11, 2021}

\title[Cutpoints of subcontinua of polynomial Julia sets]
{Cutpoints of invariant subcontinua of polynomial Julia sets}

\dedicatory{Dedicated to Misha Lyubich's 60-th birthday}

\author[A.~Blokh]{Alexander~Blokh}

\author[L.~Oversteegen]{Lex Oversteegen}

\thanks{The second named author was partially  supported
by NSF grant DMS-1807558}

\author[V.~Timorin]{Vladlen~Timorin}

\thanks{The third named author has been supported by the HSE University Basic Research Program and Russian Academic Excellence Project '5-100'.}

\address[Alexander~Blokh, Lex~Oversteegen]
{Department of Mathematics\\ University of Alabama at Birmingham\\
Birmingham, AL 35294-1170}

\address[Vladlen~Timorin]
{Faculty of Mathematics\\
HSE University, Russian Federation\\
6 Usacheva St., 119048 Moscow
}

\address[Vladlen~Timorin]
{Independent University of Moscow\\
Bolshoy Vlasyevskiy Pereulok 11, 119002 Moscow, Russia}

\email[Alexander~Blokh]{ablokh@math.uab.edu}
\email[Lex~Oversteegen]{overstee@uab.edu}
\email[Vladlen~Timorin]{vtimorin@hse.ru}

\subjclass[2010]{Primary 37F45, 37F20; Secondary 37F10, 37F50}

\keywords{Complex dynamics; Julia set; external rays}

\begin{abstract}
We prove  fixed point results for branched covering maps
$f$ of the plane. For complex polynomials $P$ with Julia set $J_P$ these
imply that periodic cutpoints of some invariant subcontinua of $J_P$
are also cutpoints of $J_P$.
We deduce that, under certain assumptions on invariant subcontinua $Q$ of $J_P$,
every Riemann ray to $Q$ landing at a periodic repelling/parabolic point $x\in Q$ is isotopic to a Riemann ray to $J_P$
relative to $Q$.
\end{abstract}

\maketitle

\section{Introduction}\label{s:intro}

The plane fixed point problem is a central problem in continuum theory. While
solving it in full generality is still elusive, advances have been
recently made in certain cases. %\cite{bfmot13}.
In particular, by \cite{bfmot13}, for \emph{positively oriented branched covering  maps of
the plane} the existence of a fixed point \emph{with specific
properties} can be established even inside non-invariant continua
(provided certain conditions hold). In this paper we use tools from
\cite{bfmot13}, prove new fixed point results in the same spirit, and rely upon them to tackle some
topological problems of polynomial dynamics. In the Introduction we assume knowledge of complex dynamics and give
preliminary versions of our main results; later we make more detailed
statements.

\subsection*{Standing notation throughout the paper}
Write $\R_+$ for the set $\{x\in \R, x>0\}$ of positive reals, $\C$
for the plane of complex numbers, $\widehat\C$ for the Riemann sphere,
$\disk=\{z\in\C\,:\, |z|<1\}$ for the open unit disk centered at the
origin, $\cdisk=\{z\in\C\,:\, |z|\le 1\}$ for the corresponding closed
disk (more generally, $\ol A$ will refer to the closure of $A\subset\C$), and $\uc=\{z\in\C\,:\,|z|=1\}$ for the unit circle.
Fix a
polynomial $P$ of degree $\deg(P)>1$ with connected Julia set $J_P=J$
and filled Julia set $K_P=K$ (if it does not cause ambiguity, we do not
refer to $P$ in our notation). We always assume that the term
$z^{\deg(P)}$ has coefficient $1$ in $P(z)$, where $\deg(P)$ is the
degree of $P$ (this can be achieved by a coordinate change of the form
$z\mapsto \la z$). The boundary of a set $E$ is denoted by $\bd(E)$.
Given a compact
set $Q\subset\C$, denote by $U_\infty(Q)$ the unbounded complementary
domain of $Q$ and by $\thu(Q)$ the set $\C\sm U_\infty(Q)$ called the
\emph{topological hull} of $Q$.
Given a continuum $Q$, consider the topological disk $\widehat\C\sm\thu(Q)$ and
 a conformal isomorphism between this disk and $\widehat\C\sm\cdisk$.
We will always assume that the conformal isomorphism takes $\infty$ to $\infty$ and has real positive derivative at $\infty$.
We refer to such an isomorphism as the \emph{Riemann map} for $\widehat\C\sm\thu(Q)$.
The images under the Riemann map of straight radial rays connecting $\uc$ with $\infty$ are called \emph{$Q$-rays}.

Assume that $K$ is connected.
A periodic repelling/parabolic point $x$ of $P$ is said to be \emph{regular};
a point eventually mapped to a regular periodic point is called a \emph{regular (pre)periodic} point.

The main theorem of this paper relies on a fixed point result which
partially strengthens a theorem of \cite{GM}. Consider the union $\Sigma$ of all
invariant $P$-rays and the set $\fix$ of their landing points. According to \cite{GM},
every complementary component $A$ of $\Sigma$ contains a unique
invariant rotational object (either a rotational fixed point of $P$ or
an invariant parabolic domain). We prove Theorem \ref{t:1a-new}, in which
the component $A$ is replaced by a (not even
necessarily invariant) continuum in $A\cup \fix$ that is locally invariant near
$\bd(A)$ (this makes Theorem \ref{t:1a-new} slightly stronger
that the quoted result of \cite{GM} as far as the \emph{existence} of an
invariant rotational object is concerned).

Let $Q\subset J$ be a continuum.
A point $x\in Q$ is a \emph{cutpoint of $Q$ of order $n$} if $Q\sm \{x\}$ has exactly $n$ components.
Suppose that either $Q$ is full, or $Q=\bd(\thu(Q))$.

\begin{thmain}
Let $x\in K$ be a regular periodic point
and all $K$-rays to $x$ form $m$ wedges $W_i$, where $1\le i\le m$.
Moreover, suppose that $x\in Q$ is a cutpoint of order $n$ of an invariant continuum $Q\subset J$.
Then $n\le m$, each wedge $W_i$ intersects $Q$ over a connected (possibly empty) set,
 and every $Q$-ray to $x$ is isotopic rel. $Q$ to a $K$-ray that lands at $x$.
\end{thmain}

Observe that a point may be a cutpoint of a subcontinuum while not
being a cutpoint of a big continuum. E.g., if a continuum $X$ is formed
by the graph of $\sin(\frac{1}{x})$ on the segment $(0,
\pi]$ and segment $I$
connecting points $(0, -1)$ and $(0, 1)$, then any non-endpoint $x$ of
$I$ is a cutpoint of $I$ of order 2, but \emph{not} a cutpoint of $X$.
The main theorem implies that such behavior is impossible for invariant continua of polynomial Julia sets.

\section{Rotational fixed points in non-invariant continua}\label{s:fxpt-in-cont}

\subsection{Weakly repelling fixed points}
Let $f:X\to X$ be a continuous map of a locally compact metric space $(X, d)$ to itself.
Call an $f$-fixed point $x$
\emph{weakly repelling in the sense of the metric $d$} if, for an open
neighborhood $U$ of $x$, the restriction $f|_U$ is a homeomorphism,
and $d(f(y), x)>d(y, x)$ for any $y\in U, y\ne x$.
Now, let $X$ be a
%added locally compact
locally compact  topological space, $x\in X$ be a point, and $f:X\to
X$ be a continuous map. The point $x$ is \emph{weakly repelling} if it
is weakly repelling in the sense of
some metric on $X$ that induces the given topology.
The orbit of any point $y\in U, y\ne x$ escapes any compact subset of $U$.
Indeed, otherwise the sequence of distances $d(x,f^n(y))$ is
increasing and bounded for some $y\ne x$. This implies that for any
limit point $z$ of the orbit of $y$ we have, by continuity,
$d(x,z)=d(x, f(z))$, a contradiction.
Let us call this property the \emph{escaping property at $x$} (in $U$).
Thus, there are no fixed points of $f$ in $U$ except for $x$.

If $\psi:\R_+\to \C$ is an embedding, call
$T=\psi(\R_+)$ a \emph{(topological) ray}. If also $\lim_{t\to
\infty}\psi(t)=\infty$, say that $T$ is \emph{a ray from infinity}. If
$Q$ is a full continuum or the boundary of a full continuum, $Q$-rays
are rays from infinity. If $\lim_{t\to 0} \psi(t)=y\notin T$,
say that $T$ \emph{lands} at $y$. A ray $R$ is
\emph{($f$-)invariant} if $f(R)\supset R$; if $R$ is a ray
from infinity, call $R$ \emph{($f$-)invariant} if $f(R)=R$.
Consider a positively oriented branched covering map $f:\C\to \C$ and an $f$-invariant
ray $T$ landing at a fixed point $y$.
If $y$ is weakly repelling, then, for some $r$ and any $s\in (0, r)$, the point $\psi(s)$ is mapped to $\psi(s')$ with $s'>s$.
(In this case, we say that points of $T$ move \emph{away from} $y$.)
Otherwise the absence of fixed points of $f$ close to $x$ implies that for all small $s$ the point $\psi(s)$
is mapped to $\psi(s')$ with $s'<s$ which contradicts the escaping property at $y$.

\begin{dfn}\label{d:rotat}
Suppose that $f:\C\to \C$ is a positively oriented branched covering map, $X\subset \C$ is
a full continuum, and $x\in X$ is an $f$-fixed point such that, for some
neighborhood $U$ of $x$, we have that $f|_U$ is a homeomorphism and
$f(U\cap X)\subset X$. Then $x$ is called \emph{non-rotational
for $X$} if there exists an invariant topological ray $T\subset \C\sm
X$ that lands at $x$ such that points of $T$ sufficiently close to $x$
move away from $x$.
If such ray $T$ does not exist, then a fixed point $x$ is called \emph{rotational for $X$}.
\end{dfn}

Examples of  rotational fixed points are points contained in the
interior of $X$, or points in the boundary of $X$ but  not accessible,
or  accessible points $x$ at which no invariant topological ray lands
such that its points close to $x$ move away from $x$ along the ray
(\emph{accessible} always means accessible from $U_\infty(X)$).

We do not talk of (non-)rotational fixed points $x$ in the absence of
the continuum $X$ as then the definitions are too inclusive: if $d$ is a metric on $\C$ defining the usual topology,
if $x$ is a fixed point at which $f:\C\to \C$ is a homeomorphism, and if,
for some $\la>1$, we have $d(x, f(y))>\la d(x, y)$, then we can always
construct an invariant topological ray landing at $x$. Indeed, choose a
small open disk $D_0$ around $x$ such that a pullback $D_1$ of $D_0$ is
compactly contained in $D_0$. Choose a topological arc $I_1$ in $\ol
D_0\sm D_1$ connecting a point $y_1\in\bd(D_1)$ with
$y_0=f(y_1)\in\bd(D_0)$ but otherwise lying in $D_0\sm \ol D_1$.
Consider a pullback $I_2$ of $I_1$ connecting $y_1$ with $y_2$, a
pullback of $I_2$ connecting $y_2$ with $y_3$, etc. The countable union
of these iterated pullbacks of $I_1$ is an invariant topological ray landing at
$x$.

%We now go back to the dynamics of a polynomial $P:\C\to\C$.
The Riemann map  $\vp:\widehat\C\sm K_P\to \widehat\C\sm\cdisk$
conjugates $P|_{\C\sm K_P}$ with the restriction of $z\mapsto z^{\deg(P)}$ to $\C\sm \cdisk$.

\begin{lem}\label{l:invar}
A fixed point $x\in K_P$ is non-rotational for $K_P$ if and only if there
exists an invariant $K_P$-ray landing at $x$ (hence $x$ is regular).
Thus, a fixed point $x\in K$ is rotational if it is either non-regular (i.e.,
attracting, Cremer or Siegel) or regular with non-zero combinatorial
rotation number at $x$.
\end{lem}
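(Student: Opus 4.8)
The plan is to exploit the dynamical conjugacy furnished by the Riemann map $\vp:\widehat\C\sm K_P\to\widehat\C\sm\cdisk$, which conjugates $P$ on the basin of infinity to $z\mapsto z^{\deg P}$. First I would prove the forward implication by unwinding the definitions. Suppose $x\in K_P$ is non-rotational for $K_P$; by Definition \ref{d:rotat} there is an invariant topological ray $T\subset\C\sm K_P$ landing at $x$ whose points near $x$ move away from $x$. The ray $T$ lies in $U_\infty(K_P)$, so $\vp(T)$ is a curve in $\widehat\C\sm\cdisk$ that is invariant under $z\mapsto z^{\deg P}$ and accumulates on $\uc$. The key step is to show that this forces $\vp(T)$ to be asymptotic to a genuine radial ray landing at a fixed point $\ta\in\uc$ of $z\mapsto z^{\deg P}$ (equivalently, a $(\deg P - 1)$-st root of unity), i.e.\ that $x$ is the landing point of the external ray with argument $\ta$; in particular $x$ is regular (a repelling or parabolic periodic — here fixed — point), by the Douady--Hubbard landing theorem. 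The mechanism is that ``points move away from $x$'' translates, under $\vp$, into the radial coordinate strictly increasing along $T$ near $x$, which pins down the angular coordinate to converge and be fixed by multiplication by $\deg P$. Conversely, if an invariant $K_P$-ray (i.e.\ external ray) lands at $x$, then $x$ is regular, the ray is invariant, and its points move away from $x$ because the conjugated dynamics $z\mapsto z^{\deg P}$ pushes radial coordinates strictly outward away from $\uc$; hence $x$ is non-rotational by definition. This also shows $x$ is regular in both directions, giving the parenthetical claim.

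For the second sentence of the Lemma I would argue by contraposition from the first: a fixed point $x\in K$ that is \emph{not} rotational is, by the first part, regular with an invariant external ray landing at it, and an invariant external ray landing at a regular fixed point is exactly one whose combinatorial rotation number at $x$ is zero (among the finitely many external rays landing at $x$, the ones fixed by $P$ are those whose angle is fixed by multiplication by $\deg P$, which is the rotation-number-zero condition). Contrapositively, if $x$ is non-regular — attracting, Cremer, or Siegel — then no external ray lands at $x$ at all (Cremer and Siegel fixed points are not accessible; attracting fixed points lie in the interior of $K$), so $x$ is rotational; and if $x$ is regular but has non-zero combinatorial rotation number, then $P$ permutes the external rays landing at $x$ cyclically without fixing any, so again no \emph{invariant} external ray lands at $x$ and $x$ is rotational by the first part.

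The main obstacle I anticipate is the step establishing that an abstract invariant topological ray $T\subset\C\sm K_P$ landing at $x$, with points moving away from $x$, is in fact (asymptotic to) a conformal external ray landing at a fixed argument — one must rule out the possibility that $T$ spirals, oscillates in argument, or fails to have a well-defined limiting external angle even though it lands at $x$. The resolution is to transport $T$ by $\vp$ to the exterior of the disk and use that the ``moving away'' hypothesis makes the radial coordinate a strictly monotone (hence eventually injective) parameter along the curve near $\uc$, while invariance under $z\mapsto z^{\deg P}$ constrains the argument; then invoke that at a repelling/parabolic fixed point the landing external rays are the periodic ones and that $x$ must be such a point (since otherwise, by the escaping property and the classification of fixed points of polynomials, no invariant ray in $\C\sm K_P$ can land there). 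The remaining pieces — the Douady--Hubbard landing theorem, the non-accessibility of Cremer/Siegel points, and the correspondence between invariant external rays and zero combinatorial rotation number — are standard and can be cited.
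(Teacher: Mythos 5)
Your overall route is the same as the paper's: transport the invariant topological ray to $\widehat\C\sm\cdisk$ by the Riemann map, show its image lands at a point $e^{2\pi i\ta}\in\uc$ with $d\ta=\ta$ (mod $1$), and conclude that the invariant $K_P$-ray of argument $\ta$ lands at $x$; the converse and the second sentence are handled the same way in both arguments. However, the mechanism you propose for the step you correctly identify as the crux does not work as stated. First, ``points of $T$ move away from $x$'' is a condition on an auxiliary metric near $x$ in the plane; it does not translate into monotonicity of the radial coordinate $|\vp(\cdot)|$ (equivalently, of the Green's function) along $T$, and nothing in Definition~\ref{d:rotat} gives you such monotonicity. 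Second, even if the radial coordinate were strictly monotone along $\vp(T)$, that alone would not force the angular coordinate to converge: a curve in $\{|z|>1\}$ can approach $\uc$ with $|z|$ strictly decreasing while its argument oscillates or spirals without limit, so ``monotone radius plus invariance constrains the argument'' is not yet a proof.

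The correct and standard tool --- the one the paper invokes implicitly when it asserts that $\psi^{-1}(R)$ lands at a point of $\uc$ --- is the Lindel\"of theorem on accessible boundary points: if a curve in $U_\infty(K_P)$ lands at a point $x$ of the boundary, then its preimage under the Riemann map lands at some $e^{2\pi i\ta}\in\uc$, and moreover the radial limit of the Riemann map at $e^{2\pi i\ta}$ equals $x$. This needs only that $T$ lands at $x$ (not the ``moving away'' condition), and it delivers both halves of what you want at once: invariance of $T$ then forces $d\ta=\ta$ (mod $1$), so $\ta$ is rational and Theorem~\ref{t:ration} applies, and the radial-limit statement says precisely that the $K_P$-ray of argument $\ta$ lands at $x$ (so $x$ is regular). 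If you replace your monotonicity argument by this citation, your proof closes the gap and coincides with the paper's.
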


\begin{proof}
If there is an invariant $K_P$-ray landing at $x$, then $x$ is
non-rotational for $K_P$. If now $x$ is non-rotational for $K_P$, then
$x\in J=\bd(K_P)$ is accessible from $\C\sm K_P$, and an invariant
\emph{topological} ray $R\subset \C\sm K_P$ lands at $x$.
Let $\psi:\widehat\C\sm \cdisk \to \widehat\C\sm K_P$ be the Riemann map.
The \emph{topological} ray $\psi^{-1}(R)$ lands at a point
$e^{2\pi i\ta}\in\uc$. Then $d\ta=\ta$ modulo 1, and the $K_P$-ray of
argument $\ta$ lands at $x$.
\end{proof}

\subsection{Plane continua and fixed points}
By a \emph{continuum} we mean a compact and connected metric space and
by a \emph{full continuum} $X$ in the plane $\C$ we mean a continuum
$X\subset \C$ such that $\C\sm X$ is connected. A famous theorem by
Brouwer \cite{bro11} states that every continuous map from a disk to
itself has a fixed point (this property is called the \emph{fixed point
property}). This result motivated the following long-standing problem
in topology.

\medskip

\noindent\textbf{Plane Fixed Point Problem \cite{ste35}.}
Does a continuous map of a full plane continuum to itself always have a fixed point?

\medskip

Even though this problem is not solved yet, certain progress has been made.
By \cite{bfmot13}, full plane continua do have the fixed point property for a
restricted class of maps: all \emph{positively oriented branched covering
maps of the plane} (see Definition~2.3.3 in \cite{bfmot13}). Evidently, every map
$f:[a,b]\to \mathbb R$ so that $f(a)>a$ and $f(b)<b$ has a fixed point
(even though the interval $[a,b]\subset \R$ is not required to map into itself).
It was shown in \cite{bfmot13} that similar results hold for full plane
continua under a positively oriented branched covering map of the
plane. We will extend these results in this section. We will also provide,
in certain cases,  more information about the local behavior of the map
near the fixed point.

%\subsection{An overview of some continuum theory tools}
\subsection{Background from complex dynamics and continuum theory}
\label{ss:fxpt}
If $K$ is a
connected filled Julia set of a polynomial $P$, the Riemann map for $\C\sm K$ has also a dynamical
meaning as it conjugates $P|_{\C\sm K}$ and $z\mapsto z^d$ restricted to $\C\sm \cdisk$.
Call a $K$-ray \emph{rational} if has rational argument.

\begin{thm}[\cite{DH}]\label{t:ration}
Each rational $K$-ray lands; its landing point is a periodic or preperiodic regular point.
Conversely, each regular periodic point is the landing point of a non-empty finite collection of rational
$K$-rays.
\end{thm}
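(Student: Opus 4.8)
The plan is to treat the two halves separately, in each case reducing to \emph{periodic} arguments and exploiting the conjugacy of $P|_{\C\sm K}$ with $z\mapsto z^d$ afforded by the Riemann map $\vp$. For the first assertion, start with $\al\in\R/\Z$ periodic under $m_d\colon t\mapsto dt$, of period $n$, so that the $K$-ray $R_K(\al)$ is $P^n$-invariant with $P^n$ multiplying Green's potential $G$ by $d^n$. Since $K$ is connected all finite critical points of $P$ lie in $K$, so $P\colon\C\sm K\to\C\sm K$ is an unbranched proper covering, hence a local isometry of the hyperbolic metric $\rho$ of $\C\sm K$. Let $I_k$ be the sub-arc of $R_K(\al)$ on which $G\in[G_0 d^{-nk},G_0 d^{-n(k-1)}]$; then $P^n(I_{k+1})=I_k$, so all $I_k$ have the same finite hyperbolic length $L$. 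Using the universal lower bound $\rho(z)\ge\frac{1}{2\,\dist(z,J)}$ together with the fact that $\{G\le\eps\}$ shrinks to $K$ as $\eps\to0$ (so the arcs $I_k$ crowd into $J=\bd(K)$, along which $\dist(\cdot,J)\to0$), I would deduce that the nested tails $\overline{R_K(\al)\cap\{G\le G_0 d^{-nN}\}}$ have Euclidean diameter tending to $0$, forcing $\Pr_K(\al)$ to be a single point $x$, with $P^n(x)=x$ and $x\in J$. The dynamical nature of $x$ then follows from the \emph{Snail Lemma} applied to $P^n$ at $x$ along the $P^n$-invariant ray tail: it forces $|(P^n)'(x)|\ge1$ and, when the modulus is $1$, the multiplier to be a root of unity; this rules out the attracting, Siegel and Cremer cases, so $x$ is repelling or parabolic, i.e.\ regular. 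For $\al$ merely preperiodic, with $m_d^j(\al)=\al'$ periodic, one has $P^j(R_K(\al))=R_K(\al')$; since $R_K(\al')$ lands at a regular periodic point $x'$, the principal set of $R_K(\al)$ is a connected subset of the finite set $P^{-j}(x')$, hence a single point, and it is a regular preperiodic point.

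For the converse, let $x$ be a regular periodic point of period $l$ and $\la=(P^l)'(x)$, $|\la|\ge1$. First I would produce at least one rational ray landing at $x$: since impressions cover $\bd(K)=J$ one may pick $\al$ with $x\in\imp_K(\al)$, and then $\Theta=\{\be\in\R/\Z : x\in\imp_K(\be)\}$ is nonempty and closed, and, as $P^l$ carries $\imp_K(\be)$ onto $\imp_K(d^l\be)$ and fixes $x$, satisfies $m_d^l(\Theta)\subseteq\Theta$. Using the local normal form of $P^l$ near $x$ --- Koenigs linearization when $|\la|>1$, a Leau--Fatou flower with repelling petals and Fatou coordinates when $|\la|=1$ --- I would ``rescale'' a fundamental chain of crosscuts accumulating at $x$ so as to replace $\al$ by a periodic (hence, by the first part, landing) argument $\al_0\in\Theta$, and check, again from the repelling/parabolic local dynamics, that $\imp_K(\al_0)$ reduces to the single point $x$; thus $R_K(\al_0)$ lands at $x$. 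Finiteness of the set $\Lambda$ of arguments of rays landing at $x$ (which is $m_d^l$-forward-invariant) comes from the same local picture: the rays landing at $x$ are pairwise disjoint arcs accumulating only at $x$, and in the Koenigs coordinate (resp.\ in the finitely many repelling petals) only finitely many such arcs can fit, so $\Lambda$ is finite --- equivalently, the rays landing at $x$ form finitely many cyclically ordered sectors, permuted by $P^l$ with a well-defined rational combinatorial rotation number.

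The technical heart, and the step I expect to be the main obstacle, is the landing statement for periodic rays: obtaining genuine control of the Euclidean diameter of a ray tail as it approaches $J$, where $J$ need not be locally connected. The hyperbolic-length bookkeeping above is the natural route, but upgrading ``each $I_k$ has length $L$ and crowds into $J$'' to ``the diameters of the tails tend to $0$'' requires a summability or no-wandering argument, as carried out in Douady--Hubbard. The other delicate point is the existence half of the converse: extracting a \emph{periodic} landing angle at a prescribed regular point rests on the linearization / Fatou-coordinate analysis near $x$, and that, rather than the subsequent counting, is where the real work lies. I would follow the Douady--Hubbard scheme \cite{DH} for both.
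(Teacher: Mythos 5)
The paper offers no proof of this statement: it is quoted from Douady--Hubbard \cite{DH} (see also Milnor, Theorems 18.10--18.11), so there is no internal argument to compare yours against; I can only assess your sketch on its own. For the first half your route is the classical one (hyperbolic contraction plus the Snail Lemma), but the step you yourself flag is a real gap, and the fix you gesture at (``summability'') is not the right one. From $\ell_{\mathrm{hyp}}(I_k)\le L$ and $\rho\ge \frac{1}{2\,\dist(\cdot,K)}$ you get $\diam(I_k)\le 2L\max_{I_k}\dist(\cdot,K)\to 0$, but these diameters need not be summable, so you cannot conclude that the \emph{tails} have small diameter this way. The standard completion is different: since $I_k$ and $I_{k+1}$ abut and both have diameter tending to $0$, every $z\in I_{k+1}$ satisfies $|P^n(z)-z|\le\diam(I_k)+\diam(I_{k+1})\to 0$; hence every point of the accumulation set of the ray (a nested intersection of connected compacta, so a continuum) is a fixed point of $P^n$, and a continuum inside the finite set of fixed points of $P^n$ is a single point. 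With that inserted, and with the Snail Lemma applied to the local inverse branch of $P^n$ along the ray (points of the ray move \emph{away} from $x$ under $P^n$), the first half is complete. Two small caveats: $\C\sm K$ is not simply connected, so $\rho\ge\frac1{2\dist}$ needs to be justified near $J$ by monotonicity from the simply connected domain $\widehat\C\sm K$; and Siegel points lie in the Fatou set, so the Snail Lemma is really needed only to exclude the Cremer case.

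The converse half, however, is not yet a proof but a restatement of the difficulty. ``Rescale a fundamental chain of crosscuts so as to replace $\al$ by a periodic argument $\al_0\in\Theta$'' is essentially the theorem itself, and the naive versions fail: the map $t\mapsto d^l t$ is expanding on $\R/\Z$, so forward images of an angle in $\Theta$ do not converge to a periodic angle, and a closed forward-invariant subset of the circle need not contain a periodic point. The known arguments (Douady--Hubbard; Milnor; the Pommerenke--Levin--Yoccoz approach; Eremenko--Levin and Przytycki on accessibility) all require a genuine construction in the linearizing coordinate or the repelling petals --- e.g., passing to the quotient torus of the linearization and analyzing how external rays cross a fundamental domain --- to produce a \emph{periodic} ray accumulating at $x$, plus a separate argument that this ray lands at $x$ itself rather than at another fixed point of $P^l$. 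Your finiteness count at the end also presupposes that all rays at $x$ land and are pairwise disjoint arcs, which is fine only after existence is settled. Since you explicitly defer both hard steps to \cite{DH} --- which is exactly what the paper does --- your outline is a correct map of the classical proof, but it is not self-contained, and the two deferred steps are where all the content lies.
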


In \cite{GM}, the authors consider the partition of the plane by all invariant $K$-rays united with their landing points
(by definition and Lemma \ref{l:invar}, these landing points are exactly all non-rotational $P$-fixed points).
Denote the union of all invariant $K$-rays and their landing points by $\Sigma$.
%Let $A_1$, $\dots$, $A_k$ be the components of $\C\sm \Sigma$.
Define a \emph{rotational object} as either a rotational fixed point or an invariant parabolic domain.

\begin{thm}[\cite{GM}]\label{t:gm1a}
Every component $A$ of $\C\sm\Sigma$ contains exactly one rotational object.
%Each set $A_i$ contains either exactly one fixed point $w_i$ or no fixed points but exactly one invariant parabolic domain.
%Moreover, each such fixed point $w_i$ is rotational.
\end{thm}

In the next section, we prove a stronger result.

Theorem \ref{t:bb} is well-known (see, e.g., Corollary 5.9 of \cite{nad92}).

\begin{thm}[Boundary Bumping Theorem]
\label{t:bb} Let $X$ be a continuum, $V$ a proper nonempty open subset
of $X$, and $F$ any component of $V$. Then the closure of $F$ must
intersect the boundary of $V$.
\end{thm}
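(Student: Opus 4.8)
The plan is to argue by contradiction: assuming $\overline F \cap \bd(V) = \emptyset$, I would produce a proper nonempty subset of $X$ that is both open and closed, contradicting the connectedness of the continuum $X$.

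First I would observe that $\overline F \subseteq \overline V = V \cup \bd(V)$, so the hypothesis forces $\overline F \subseteq V$; since $F$, being a component of $V$, is relatively closed in $V$, this yields $\overline F = F$. Thus $F$ is a compact, connected, nonempty, proper subset of $X$, disjoint from the nonempty compact set $H := X \setminus V$. Using the metric, I would set $\delta := \dist(F, H) > 0$ and $W := \{x \in X : \dist(x, F) < \delta/2\}$, so that $W$ is open, $F \subseteq W$, and $\overline W \subseteq \{x : \dist(x, F) \le \delta/2\} \subseteq X \setminus H = V$.

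The heart of the argument is the observation that $F$ is a \emph{component} of the compact metric space $Z := \overline W$: any connected subset of $Z$ that contains $F$ lies inside $Z \subseteq V$ and contains $F$, hence coincides with $F$ by maximality of components of $V$. Moreover $\bd(W) = \overline W \setminus W$ is a closed subset of $Z$ disjoint from $F$ (as $W$ is open and $F \subseteq W$), and it is nonempty — otherwise $W$ itself would be a proper nonempty clopen subset of $X$. Now I would invoke the classical fact that in a compact Hausdorff space components and quasicomponents coincide: applied to the component $F$ of $Z$ and the closed set $\bd(W)$ disjoint from it, a standard finite-subcover argument produces a set $C$ that is clopen in $Z$, contains $F$, and is disjoint from $\bd(W)$.

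Finally I would upgrade $C$ to a clopen subset of $X$. Since $C \subseteq \overline W$ and $C \cap (\overline W \setminus W) = \emptyset$, we have $C \subseteq W$; writing $C = O \cap \overline W$ with $O$ open in $X$ and using $W \subseteq \overline W$ gives $C = O \cap W$, which is open in $X$. On the other hand $C$ is closed in $Z = \overline W$, which is closed in $X$, so $C$ is closed in $X$ as well. Hence $C$ is clopen with $\emptyset \ne F \subseteq C \subseteq W \subsetneq X$, the desired contradiction. I expect the one genuine obstacle to be the passage in the third paragraph: one must first replace $V$ by the smaller neighbourhood $\overline W$ of $F$ whose closure still lies in $V$, precisely so that $F$ becomes an honest component of a compact space and the components-equal-quasicomponents machinery applies; the rest — especially the final check that a set clopen in $\overline W$ is clopen in $X$ — is routine once one notices that $C$ is trapped inside the open set $W$.
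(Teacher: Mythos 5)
Your proof is correct. The paper does not prove Theorem \ref{t:bb} at all --- it simply cites Corollary 5.9 of Nadler's book --- and your argument (reduce to a compact neighborhood $\overline W$ of $F$ with $\overline W\subset V$, then invoke the classical fact that components and quasicomponents coincide in compact Hausdorff spaces to extract a clopen set separating $F$ from $\bd(W)$, hence a proper nonempty clopen subset of $X$) is precisely the standard textbook proof of that cited result, carried out with all the details in order.
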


Let $Q\subset J$ be an invariant continuum and $x\in Q$ be a fixed point.
The next lemma follows from Theorem \ref{t:bb}.

\begin{lem}\label{l:xisin}
Let $F$ be a component of $Q\sm \{x\}$. Then $\ol{F}=F\cup \{x\}$ is a continuum.
\end{lem}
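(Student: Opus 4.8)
The plan is to apply the Boundary Bumping Theorem (Theorem \ref{t:bb}) to the continuum $\ol{Q}=Q$ (recall $Q$ is a continuum), with the open set $V=Q\sm\{x\}$. First I would note that $V$ is a proper open subset of $Q$: it is open because $\{x\}$ is closed in the metric space $Q$, and it is proper because $x\in Q$. (If $V=\emptyset$, i.e. $Q=\{x\}$, the statement is trivially true with $F=\emptyset$ and $\ol F=\{x\}$, so assume $V\neq\emptyset$.) Now let $F$ be any component of $V=Q\sm\{x\}$. By Theorem \ref{t:bb}, the closure $\ol{F}$ meets the boundary $\bd_Q(V)$ of $V$ in $Q$. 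But the boundary of $Q\sm\{x\}$ inside $Q$ is exactly $\{x\}$, since $\{x\}$ is closed and $x$ is in the closure of $Q\sm\{x\}$ (as $Q$, being a continuum with more than one point, has no isolated points — or simply because $V\neq\emptyset$ and $Q$ is connected, so $x\in\ol V$). Hence $x\in\ol F$, which gives $F\cup\{x\}\subset\ol F$.

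For the reverse inclusion I would argue that $\ol F\subset F\cup\{x\}$. Since $F$ is a component of the open subset $V$ of the (locally connected? not necessarily) space $Q$, I cannot immediately say $F$ is open; instead I use the following: $\ol F\cap V$ is a connected subset of $V$ containing $F$ — indeed $\ol F$ is connected, and $\ol F\cap V$ is connected because $\ol F\sm V = \ol F\cap\{x\}$ is a single point sitting in the closure of $\ol F\cap V$, so removing it cannot disconnect $\ol F$ — wait, I should be careful here and instead simply observe that $F$ is a maximal connected subset of $V$, while $\ol F\cap V$ is a connected subset of $V$ containing $F$; by maximality $\ol F\cap V=F$, hence $\ol F\subset F\cup\{x\}$. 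Combined with the previous paragraph, $\ol F=F\cup\{x\}$.

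Finally, $\ol F$ is closed in $Q$ hence compact, and it is connected (closure of a connected set), so it is a continuum.

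The one point requiring care — and the only real obstacle — is the claim $\bd_Q(Q\sm\{x\})=\{x\}$, equivalently that $x\in\ol{Q\sm\{x\}}$; this fails only if $x$ is isolated in $Q$, i.e. $Q=\{x\}$, which is the degenerate case already handled separately. I would state this dichotomy explicitly at the start. Everything else is the standard combination of connectedness of closures, maximality of components, and the Boundary Bumping Theorem.
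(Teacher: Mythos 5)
Your proof is correct and follows essentially the same route as the paper: Boundary Bumping gives $x\in\ol F$, and maximality of $F$ as a component of $Q\sm\{x\}$ (together with the standard fact that any set between a connected set and its closure is connected, which justifies that $\ol F\cap V$ is connected) gives $\ol F\sm F\subset\{x\}$. The paper's proof is just a terser version of the same argument, so no further comment is needed.
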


\begin{proof}
By Theorem \ref{t:bb}, the closure of $F$ contains $x$.
On the other hand, $\ol{F}\sm F$ cannot contain any other points.
\end{proof}

Lemma \ref{l:maxmod} follows from the fact that branched covering maps
of the plane are open maps. It is a topological version of the Maximum
Modulus Principle; we leave its proof to the reader.

\begin{lem}\label{l:maxmod}
If $f:\C\to \C$ is a branched covering map and $E$ is a continuum then
$f(\thu(E))=\thu(f(E))$. In particular, if $E$ is $f$-invariant then so
is $\thu(E)$.
\end{lem}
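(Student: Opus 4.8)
The plan is to pass to the Riemann sphere. Extend $f$ to a map $\hat f\colon\widehat\C\to\widehat\C$ by setting $\hat f(\infty)=\infty$; since a branched covering map of the plane is proper, $\hat f$ is a continuous, open, surjective branched covering of the sphere with $\hat f^{-1}(\infty)=\{\infty\}$, and it is a closed map (a continuous map of compact Hausdorff spaces). Recall also that for a full continuum $Q\subset\C$ the set $\widehat\C\sm\thu(Q)$ is precisely the component of $\widehat\C\sm Q$ containing $\infty$, an open, connected, simply connected subset of $\widehat\C$. I would prove the two inclusions $f(\thu(E))\subseteq\thu(f(E))$ and $\thu(f(E))\subseteq f(\thu(E))$ in turn.

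The first inclusion is the genuinely ``maximum modulus'' half. Suppose $z\in\thu(E)$ but $f(z)\notin\thu(f(E))$. Since $f(z)\notin\thu(f(E))\supseteq f(E)$, the point $z$ is not in $E$, so it lies in a bounded complementary component $V$ of $E$; then $\ol V$ is compact and $\bd V\subseteq E$. Because $\hat f$ is closed, $\hat f(\ol V)=\ol{\hat f(V)}$, whence $\bd\hat f(V)\subseteq\hat f(\bd V)\subseteq f(E)\subseteq\thu(f(E))$. Put $U=\widehat\C\sm\thu(f(E))$, a connected open set containing $\infty$. Then $\hat f(V)$ is open and connected, it meets $U$ (it contains $f(z)$), and $\hat f(V)\cap U$ is relatively closed in $U$ because $\ol{\hat f(V)}\sm\hat f(V)=\bd\hat f(V)$ is disjoint from $U$; hence $\hat f(V)\cap U$ is a nonempty clopen subset of the connected $U$, so $\hat f(V)\supseteq U\ni\infty$. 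Thus some point of $V\subset\C$ maps to $\infty$, contradicting $\hat f^{-1}(\infty)=\{\infty\}$. This proves $f(\thu(E))\subseteq\thu(f(E))$, and it is a complete, routine argument.

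For the reverse inclusion it suffices to show that $f(\thu(E))$ is a \emph{full} continuum: being then a full continuum containing $f(E)$, it must contain the smallest such continuum, namely $\thu(f(E))$, and together with the first inclusion this yields equality. So set $A=\thu(E)$ and suppose, for contradiction, that $\widehat\C\sm\hat f(A)$ has a bounded component $\widehat B$, i.e.\ that $\hat f(A)$ ``encloses a hole''. Here I would use that a branched covering $\hat f$ of the sphere is \emph{confluent} — off its finite branch set it is a genuine covering map, and confluence extends across the branch set; equivalently, by Sto\"ilow's theorem $\hat f$ is topologically holomorphic. Then $Q:=\widehat\C\sm\widehat B$ is a continuum containing $\hat f(A)$, so $A\subseteq\hat f^{-1}(Q)$; letting $C$ be the component of $\hat f^{-1}(Q)$ containing $A$, confluence gives $\hat f(C)=Q\ni\infty$, hence $\infty\in C$. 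On the other hand $\hat f^{-1}(\widehat B)$ is a nonempty open set disjoint from $C$ and contained in the simply connected domain $\widehat\C\sm A$, and a degree / Riemann--Hurwitz count on $\widehat\C$ (the preimages of the disk $\widehat B$ must live inside a disk) forces $\widehat B$ to be absent, a contradiction; so $\hat f(A)$ is full. I expect the fullness of $f(\thu(E))$ to be the one real obstacle; everything else is routine.

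Finally, the ``in particular'': if $f(E)\subseteq E$ then $f(\thu(E))=\thu(f(E))\subseteq\thu(E)$ by monotonicity of $\thu(\cdot)$, so $\thu(E)$ is $f$-invariant; the cases $f(E)=E$ and $f(E)\supseteq E$ are handled identically.
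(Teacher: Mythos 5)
Your proof of the inclusion $f(\thu(E))\subseteq\thu(f(E))$ is correct and complete; this is the ``maximum modulus'' half that the authors allude to (the paper itself offers no proof, leaving the lemma to the reader, so there is nothing to compare against). The reverse inclusion, however, is where your argument breaks down, and it has to: the inclusion $\thu(f(E))\subseteq f(\thu(E))$ is \emph{false} for general continua $E$. Take $f(z)=z^2$ and $E=\{e^{i\theta}\,:\,0\le\theta\le\pi\}$, the upper unit semicircle. Then $E$ is an arc, so $\thu(E)=E$, while $f(E)=\uc$ and $\thu(f(E))=\cdisk$. Hence $f(\thu(E))=\uc\subsetneq\cdisk=\thu(f(E))$, and in particular $f(\thu(E))$ is not full.

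Concretely, the step that fails is the last sentence of your fullness argument: ``a degree / Riemann--Hurwitz count $\dots$ forces $\widehat B$ to be absent.'' This is not an argument, and in the example above everything preceding it goes through without producing any contradiction: $A=E$, $\hat f(A)=\uc$, $\widehat B=\disk$, $Q=\widehat\C\sm\disk$, $C=\hat f^{-1}(Q)=\{|z|\ge 1\}\cup\{\infty\}$, and $\hat f^{-1}(\widehat B)=\disk$ is indeed a nonempty open set disjoint from $C$ and contained in the simply connected domain $\widehat\C\sm A$ --- it maps $2$-to-$1$ onto $\widehat B$ with no violation of any degree count. You correctly identified the fullness of $f(\thu(E))$ as the one real obstacle, but it is an obstacle that cannot be overcome, because the claim is false. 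What survives is exactly your first inclusion; it already yields the ``in particular'' clause in the form $f(E)\subseteq E\Rightarrow f(\thu(E))\subseteq\thu(E)$. If invariance is meant as $f(E)=E$ and one wants the full equality $f(\thu(E))=\thu(E)$, then the surjectivity of $f|_{\thu(E)}$ onto the bounded complementary domains of $E$ requires a separate argument that genuinely uses the invariance of $E$; it does not follow from the displayed identity, which is false as stated.
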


In the end of this section, we state the following important result.

\begin{thm}[Theorem 6.6 \cite{mcm94}]\label{t:66}
Suppose $x$ is a point in a continuum $Q$ such that $Q\sm \{x\}$ has at
least $n > 1$ connected components. Then there are $n$ external $Q$-rays that land
at $x$ and separate these components of $Q\sm \{x\}$. Thus,
at least $n$ external $Q$-rays land at $x$.
\end{thm}

%Denote the fixed point/parabolic domain from Theorem \ref{t:gm1a} by $a_i$.

\subsection{An overview of some fixed point theorems}\label{ss:cont-thm}
We need a number of definitions from \cite{bfmot13}, some of which are fairly standard.

Let $p:\R \to \uc$ be the covering map $p(x) = e^{2\pi i x}$.
Let $g:\uc\to \uc$ be a continuous map.
By the \emph{degree} $\deg(g)$ of $g$ we mean the number
$G(1)-G(0)$,
where $G: \R\to \R$ is a lift of the map $g$ to the universal covering space
$\R$ of $\uc$
(i.e., $p\circ G=g\circ p$). It is well-known that $\deg(g)$ is independent of the
choice of the lift.

Let $g:\uc\to \C$ be a continuous map and $f: g(\uc)\to \C$ be a fixed
point free continuous map. Define the map $v:\uc\to \uc$ by

$$v(t)=\frac{f(g(t)) - g(t)}{|f(g(t)) - g(t)|}.$$

\noindent Define the index $\ind(f, g)$ of $f$ with respect to $g$, by
$\ind(f, g)=\deg(v)$; the index $\ind(f, g)$ measures the net number of
revolutions of the vector $f(g(t)) - g(t)$ as $t$ travels through the
unit circle one revolution in the positive direction. If $S$ is a
Jordan curve and $f:S\to \C$ is a fixed point free map, we can define
$\ind(f, S)$ by using any positively oriented homeomorphic
parameterization $g$ of $S$ by $\uc$ as in this case the index does not
depend on $g$. If $x$ is a point, $S$ is the boundary of a Jordan disk
$D$ around $x$, and $F:D\to \C$ is a positively oriented homeomorphism
such that $F(x)=x$, and $F(S)$ is disjoint from $\ol{D}$,
% V.T.: DO WE NEED THAT?  L: Yes we do. Consider a parabolic point x with one invariant Fatou domain, then Ind(F,S)=0
then it is
easy to see that $\ind(F, S)=1$. If $x$ above is a unique fixed point
in $D$, then for all small Jordan disks $D'\subset D$ with fixed point
free boundaries $S'$ we have $\ind(F, S')=1$ or $0$ depending on
whether $x\in D'$ or $x\in D\sm\ol{D'}$. To see this note that in the first case
 we can isotope $S'$ to $S$ so that $x$ is always contained in the bounded complementary domain and, in the second case,
 we can isotope $S'$ to a point while always avoiding $x$. We talk about the \emph{local
index $\ind(F, x)$ of $F$ at $x$} so that $\ind(F, x)=1$ in the case
just described.

Let us now combine Definition 7.4.5 and Lemma 7.4.9 from \cite{bfmot13}
and define a type of fixed points to which our results will be applied.

\begin{dfn}\label{d:combi}
Suppose that $f$ is a positively oriented map of the plane to itself,
$X\subset \C$ is a full continuum %or a point --- V.T.: by def above, a point IS a continuum
and $p\in \bd(X)$ is a fixed point of $f$ such that:

\begin{enumerate}

\item there exists a neighborhood $U$ of $p$ such that $f|_U$
is one-to-one and $f(U\cap X)\subset X$,

\item there exists a ray $R\subset \C\sm X$ from infinity to $p$ such that $\ol{R} = R \cup {p}$,
and points of $R$ move away from $p$ along $R$;

\item there exists a nested sequence of closed disks $D_j \subset U$ with boundaries $S_j$
 such that $p\in D_j\sm S_j$ and
 %containing $p$ in their interiors
 $\bigcap D_j=\{p\}$ and $f(S_j\sm X)\cap D_j =\0$.

\end{enumerate}

Then we say that $f$ \emph{repels outside $X$ at $p$ in the narrow sense}.

\end{dfn}

We also need to define a class of non-invariant continua to which the results of \cite{bfmot13} apply.

\begin{dfn}\label{scracon}
Suppose that $f:\C\to\C$ is a positively oriented branched covering map and $X\subset \C$ is a full continuum.
Assume that there exist $n\ge 0$ disjoint full continua $Z_i$ such that the following properties hold:

\begin{enumerate}

\item $f(X)\sm X\subset \cup_i Z_i$;

\item for all $i$,  the intersection $Z_i\cap X=K_i$ is a full continuum;

\item for all $i$, either $K_i$ is a non-rotational weakly
    repelling fixed point or $f(K_i)\cap Z_i=\0$.
\end{enumerate}
Then we say that $f$  \emph{strongly scrambles the boundary
(of $X$)} and the continua $K_i$ are called \emph{exit continua (of $X$)}.
\end{dfn}

If $n=0$ in Definition~\ref{scracon}, then $X$ is invariant (i.e., $f(X)\subset X$).

\begin{rem}\label{zxgrow}
Since $Z_i$ and $Z_i\cap X=K_i\ne \0$ are full continua and
the sets $Z_i$ are pairwise disjoint, then $X\cup (\bigcup Z_i)$ is a
full continuum. Loosely, strongly  scrambling the boundary
means that $f(X)$ can only ``grow'' off $X$ \emph{within} the sets
$Z_i$ and \emph{through} the sets $K_i\subset X$ while either
$K_i$ is a non-rotational fixed point, or the image of the set
$K_i$ is disjoint from $Z_i$.
\end{rem}

The next theorem helps find certain fixed points in some continua.

\begin{thm}[Theorem 7.4.7 \cite{bfmot13}]\label{t:7.4.7}
Suppose that $f$ is a positively oriented
branched covering map of the plane with
only isolated fixed points, $X\subset \C$ is a full continuum,
and the following holds.

\begin{enumerate}

\item Each fixed point $p\in X$ belongs to the boundary of $X$, the index $\ind(f, p)$ at $p$ equals 1,
and $f$ repels outside $X$ at $p$ in the narrow sense.

\item The map $f$ strongly scrambles the boundary of $X$.
Moreover, in the notation from Definition \ref{scracon}, for each $i$, either $f(K_i)\cap Z_i = \0$,
or there exists a neighborhood $U_i$ of $K_i$ with $f(U_i\cap X)\subset X.$
\end{enumerate}
Then $X$ is a point.
\end{thm}

\begin{comment}
We apply Theorem \ref{t:7.4.7} in the context of the non-degenerate topological
hull $X$ of a continuum $Y\subset J$ for which the condition (2) of this theorem holds. Then, by Theorem \ref{t:7.4.7},
not every fixed point of $f$ in $X$ will satisfy its condition (1). It is easy to see that
a fixed repelling point of a polynomial $P$ of combinatorial rotation number $0$
in fact satisfies condition (1) of Theorem \ref{t:7.4.7} while a fixed parabolic point $p$
with combinatorial rotation number $0$ is such that $\ind(f, p)=0$. Moreover, $p$ is certainly not
\emph{repelling} inside its basin of \emph{attraction}. Hence, if, say, an invariant
continuum $X$ contains a parabolic fixed point of rotation number $0$ on its boundary, then
condition (1) of Theorem \ref{t:7.4.7} does not hold for it, and Theorem \ref{t:7.4.7} will
only imply that the parabolic fixed point $p$ is the point at which (1) does not hold whereas
we can do better if we adjust $P$.

To overcome this obstacle we adjust (see Proposition \ref{p:adjust}) our polynomial $P$ inside its periodic parabolic domains
so that the new map $\hp$ \emph{every} periodic Fatou domain of degree greater
than 1 has only repelling (in the topological sense) periodic points on its boundary, and
contains a unique attracting (in the topological sense) periodic point of the appropriate period
in itself. Theorem \ref{t:7.4.7} now applies to the modified map $\hp$ and allows us to make
conclusions concerning the existence of rotational fixed points in certain continua.
\end{comment}

Recall that we consider only metrics in $\C$ that generate the same topology as the Euclidian metric.

\begin{lem}\label{l:repel-curves}
Let $f:\C\to\C$ be a positively oriented branched covering; let $x$ be
a weakly repelling fixed point of $f$ in the sense of a metric $d$. For
a small open $d$-disk $D_1$ of radius $\eps$ centered at $x$ with
boundary $S_1$ take iterated pullbacks of $D_1$ under the map $f$
choosing the branch of the inverse function that fixes $x$. Denote the
$n$-th pullback of $D_1$ by $D_n$, and set $\bd(D_n)=S_n$; then, for
every $n$, the set $D_n$ is a Jordan disk, and $\ol{D}_{n+1}\subset
D_n$. Moreover, $\{x\}=\cap D_i$.
\end{lem}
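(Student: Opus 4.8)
\textbf{Proof plan for Lemma \ref{l:repel-curves}.}

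The plan is to argue by induction on $n$, maintaining the two invariants that $D_n$ is a Jordan disk and that $\ol{D}_{n+1}\subset D_n$, and then deduce $\cap D_i=\{x\}$ from the escaping property. First I would fix the open neighborhood $U$ of $x$ on which $f$ is a homeomorphism with $d(f(y),x)>d(y,x)$ for $y\in U\setminus\{x\}$, and shrink $\eps$ so that the closed $d$-disk $\ol{D}_1$ of radius $\eps$ is contained in $U$. Since $f|_U$ is a homeomorphism onto its (open) image and $x=f(x)\in f(D_1)$, the branch $g=(f|_U)^{-1}$ is a well-defined homeomorphism on $f(D_1)$ fixing $x$; the issue is that $g$ need not be defined on all of $D_1$. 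This is where the weak repelling property does the work: for $y\in\bd(D_1)=S_1$ we have $d(f(y),x)>d(y,x)=\eps$, so $f(S_1)$ lies outside $\ol{D}_1$; hence $\ol{D}_1\subset f(D_1)$ (the image of the open disk bounded by the Jordan curve $f(S_1)$, using that $f$ is positively oriented so $f(S_1)$ bounds $f(D_1)$), and in particular $g$ is defined on a neighborhood of $\ol{D}_1$. Then $D_2:=g(D_1)$ is a Jordan disk with boundary $S_2=g(S_1)$, and since $g$ strictly decreases $d$-distance to $x$ on $U\setminus\{x\}$ (being the inverse of a strict increaser), $g(\ol{D}_1)\subset D_1$, so $\ol{D}_2\subset D_1$. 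This establishes the base case and, with $D_1$ replaced by $D_n$ throughout, the inductive step: at each stage $D_n\subset\ol{D}_1\subset U$, so the same argument applied to $S_n$ gives that $D_{n+1}=g(D_n)$ is a Jordan disk with $\ol{D}_{n+1}\subset D_n$.

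For the final claim $\cap D_i=\{x\}$, I would note that $\{D_i\}$ is a nested sequence of Jordan disks all containing $x$, so $\cap D_i$ is a continuum containing $x$. If it contained another point $y\ne x$, then the $f$-orbit of $y$ would stay in $\ol{D}_1$, hence in a compact subset of $U$; but the escaping property at $x$ (proved in the Introduction for weakly repelling fixed points) says the orbit of any point of $U\setminus\{x\}$ escapes every compact subset of $U$, a contradiction. Alternatively, and perhaps more directly, each $D_i$ maps forward onto $D_1$ under $f^i$, and any $y\in\cap D_i$ has $d(f^n(y),x)$ strictly increasing while remaining bounded by $\eps$, forcing a limit point $z$ of the orbit with $d(z,x)=d(f(z),x)$, contradicting the weak repelling property at $x$. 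Either way we conclude $\cap D_i=\{x\}$.

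The main obstacle is the topological step verifying that $\ol{D}_1\subset f(D_1)$ and hence that the fixed branch of $f^{-1}$ extends continuously over $\ol{D}_1$: $f$ is only a branched covering, so a priori there could be critical values interfering, and $f(S_1)$ being a Jordan curve separating $x$ from $\infty$ needs the positive-orientation hypothesis to guarantee that the bounded complementary component of $f(S_1)$ is exactly $f(D_1)$ (and not, say, $f(\C\setminus\ol{D}_1)$). I would handle this by first noting that $f|_U$ is an orientation-preserving homeomorphism onto the open set $f(U)$, so $f(D_1)$ is the bounded complementary component of the Jordan curve $f(S_1)$; since $d(\cdot,x)>\eps$ on $f(S_1)$ while $d(x,x)=0$, the point $x$ lies in this bounded component, i.e. $\ol{D}_1$ (which is the $\eps$-ball's closure and stays on the $x$-side of $f(S_1)$ by the distance inequality applied along radii) is contained in $f(D_1)$. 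Once this is in place the inductive construction and the diameter/escaping argument are routine.
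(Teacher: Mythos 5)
Your proof is correct and follows essentially the same route as the paper's: shrink $\eps$ so that $f$ is injective and strictly $d$-expanding on $\ol{D}_1$, note that $f(S_1)$ misses $\ol{D}_1$ so that $\ol{D}_1\subset f(D_1)$ and the fixed inverse branch is defined on a neighborhood of $\ol{D}_1$, then iterate to get the nested Jordan disks. Your argument for $\cap D_i=\{x\}$ via the escaping property is only a minor variant of the paper's (which uses $f(\cap D_i)=\cap D_i$ and the maximal distance $\delta$ from $x$ realized on $\cap D_i$, noting that points realizing $\delta$ must map strictly farther away); both rest on the same observation.
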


In the above case we say that the sequence $\{D_j\}$ is a \emph{repelling
sequence} for $x$.

\begin{proof}
We may assume that $\eps$ is small enough so that $d(f(y),x)>d(y,x)$
for all $y\in\ol D_1$ and $f$ is one-to-one on $\ol{D}_1$. Since $x$ is
weakly repelling and $x$ is fixed, $f(\bd(D_1))\cap D_1=\0$ and it is easy to see that
$\ol{D}_1\subset f(D_1)$
Since $f$ at $x$ is a local
homeomorphism, $\ol{D}_{n+1}\subset D_n$. We claim that $\{x\}=\cap
D_i$. Indeed let $\{x\}\subsetneqq \cap D_i$ (observe that $\cap
D_i=\cap \ol{D}_i$). Clearly, $f(\cap D_i)=\cap D_i$. On the other
hand, let $\delta=\max\{d(y, x):y\in \cap D_i\}$. Then $\cap D_i$ is
contained in the closed disk $E$ of radius $\delta$ centered at $x$,
and some points of $\cap D_i$ belong to $\bd(E)$. Since $x$ is weakly
repelling, $\bd(E)$ is mapped outside $E$ which makes $f(\cap D_i)=\cap
D_i$ impossible, a contradiction.
\end{proof}

\subsection{New fixed point results}
\label{ss:new-fxpt}
We return to the case of a complex polynomial $P$ with connected $K_P$.
The following construction allows to replace parabolic points with attracting ones in %a changed a to the
the topological category.

\begin{prop}
  \label{p:adjust}
There exists a topological branched covering $\widehat{P}$ of the sphere such that the following holds.
\begin{itemize}
  \item The maps $\widehat{P}$ and $P$ coincide outside of periodic parabolic domains of $P$.
  \item For a periodic parabolic domain $U$ of $P$ of period $n$, all periodic points of $\widehat{P}$ of period $n$ in $\bd(U)$ are weakly repelling.
  \item The map $\widehat{P}$ has a unique (super)attracting periodic point of period $n$ in any
  periodic parabolic domain $U$ of $P$ of period $n$,
  and $U$ is an immediate attracting basin for this point with respect to $\widehat{P}$.
\end{itemize}
\end{prop}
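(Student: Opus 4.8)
The plan is to modify $P$ locally inside each periodic parabolic domain, using the standard model for parabolic dynamics, so as to replace the parabolic periodic point on the boundary by a (super)attracting cycle inside the domain while making the boundary periodic points weakly repelling. First I would recall that, by the Leau--Fatou flower theorem, near a parabolic periodic point $z_0$ of period $n$ and multiplier a primitive $q$-th root of unity, the domain $U$ together with the other petals attached at the cycle of $z_0$ carries a well-understood dynamics: in suitable Fatou coordinates the return map $P^{nq}$ is conjugate to a translation $w\mapsto w+1$ on an upper half-plane, and $U$ is an attracting petal. The key point is that all of this happens in a neighborhood of $\bd(U)$ that we will \emph{not} touch; only the dynamics deep inside $U$ will be altered.

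The main construction goes as follows. Fix one periodic parabolic domain $U$ of period $n$ and consider the first-return map $g=P^{n}\colon U\to U$. Pick a small topological disk $D\subset U$ compactly contained in $U$ whose forward $g$-orbit eventually lands in $D$ (such $D$ exists because every point of $U$ converges to the parabolic cycle along the petal, so one can take a ``fundamental annulus'' for the return dynamics and push a disk through it). Inside $D$ I would excise the given dynamics and glue in a model map with a single (super)attracting fixed point for $g$, matching $g$ on $\bd D$; since $g|_{\bd D}$ is an orientation-preserving embedding of a circle into $U$ bounding $g(D)$, and $D$ is a disk, such an extension as an orientation-preserving branched cover exists (this is where one uses that we are building a topological, not holomorphic, branched covering). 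Transporting back through $P,\dots,P^{n-1}$ produces $\widehat P$ agreeing with $P$ outside the (now slightly shrunk) parabolic domains, with a unique attracting cycle of period $n$ inside $U$. It then remains to arrange the boundary behavior: near the parabolic periodic point $z_0$, after the modification the repelling petals are unchanged, and in Fatou coordinates the dynamics on the ``parabolic'' side can be perturbed to $w\mapsto w+1+\epsilon$ type behavior — equivalently, one chooses the modification so that in a metric adapted to the flower the point $z_0$ strictly increases distances. Concretely, one replaces $g$ near $z_0$, on the $U$-side only, by a map conjugate (in the old Fatou coordinate) to $w\mapsto w+1$ precomposed with a contraction toward a nearby attracting point; pulling the standard Euclidean metric back through the Fatou chart and smoothing gives a metric $d$ on $\C$ in which every period-$n$ point of $\widehat P$ on $\bd(U)$ is weakly repelling. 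One must check that these local choices at the finitely many parabolic cycles, and the requirement that $\widehat P$ remain a globally defined branched cover of the sphere, are compatible; this is routine since the modifications have pairwise disjoint supports contained in the parabolic domains.

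The hard part, and the step deserving the most care, is the boundary condition: making \emph{all} period-$n$ points of $\widehat P$ in $\bd(U)$ weakly repelling \emph{simultaneously and in a single metric}, while keeping $\widehat P$ equal to $P$ off the parabolic domains (so the metric must be the standard one away from a neighborhood of the parabolic cycles). The subtlety is that a parabolic point is neutral, not repelling, so one genuinely changes the germ; one must verify that the new germ at $z_0$ is topologically repelling (has a repelling-type local dynamics on $\bd(U)$ and on the $U$-side points escape toward the new attractor) and that the metrics glued from the several Fatou charts patch with the Euclidean metric to give a single metric inducing the topology — here the escaping property discussed in the introduction, together with openness of branched covers, is what certifies weak repulsion. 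I expect the attracting-cycle part and the branched-cover extension across $D$ to be straightforward; the weak-repulsion-at-the-boundary part is where the argument has real content.
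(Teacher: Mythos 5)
Your overall goal is right, but the construction you propose has two concrete failures, and the second is exactly at the step you yourself identify as the crux. First, the attracting cycle cannot be produced by a surgery supported on a disk $D\Subset U$ that matches $g=P^n$ on $\bd(D)$. In a parabolic domain every orbit of $g$ converges to the parabolic point on $\bd(U)$, so the forward $g$-orbit of $D$ does not return to $D$ (there is no ``fundamental annulus'' with recurrence), and no disk $D\Subset U$ satisfies $g(\ol D)\subset D$ (otherwise $\bigcap_k g^k(\ol D)$ would be a nonempty compact invariant subset of $U$, contradicting convergence to $\bd(U)$). If your new map agrees with $g$ on $\bd(D)$ and $g(D)$ is disjoint from $D$, the fixed point index of the new map on $D$ is $0$, so no fixed point is created. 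Second, the boundary step is not actually carried out: replacing $w\mapsto w+1$ by $w\mapsto w+1+\epsilon$ in the Fatou coordinate still translates toward $\infty$, i.e.\ toward $z_0$, so points of the attracting petal still converge to $z_0$ and $z_0$ fails the escaping property in every metric; and ``precomposing with a contraction toward a nearby attracting point'' is not a definition one can check weak repulsion against. Both problems have the same source: the modification must reach $\bd(U)$, not stay compactly inside $U$.

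The paper's proof does exactly that, using an input you do not invoke: by Roesch--Yin the boundary of $U$ is a Jordan curve, so there is a homeomorphism $\psi:\cdisk\to\ol U$. Setting $h=\psi^{-1}\circ P\circ\psi$ on $\uc$, one replaces $P$ on all of $U$ by the radial cone $H(w)=|w|^2\,h(w/|w|)$, $H(0)=0$. This agrees with $P$ on $\bd(U)$ (so the glued map is a branched covering), has a superattracting fixed point at $\psi(0)$, and makes every fixed boundary point weakly repelling: the restriction $h$ is weakly repelling on $\uc$ (shown via repelling petals, whose $\psi$-preimages give one-sided neighborhoods in $\uc$ contracted by the local inverse), and the factor $|w|^2$ strictly decreases the radius, pushing interior points off the boundary. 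You should rebuild your argument around this global replacement of $P|_{\ol U}$ rather than a compactly supported surgery.
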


Observe that the action of $P$ and of $\hp$ %outside the interior of $K$
on $K$-rays % is the same L: changed wording
 and on $J$ is the same.
 %  The map $\hp$ has all former parabolic points as
%repelling so that Theorem \ref{t:7.4.7} applies to them.
All parabolic points of $P$ are changed to repelling points of $\hp$
so that Theorem \ref{t:7.4.7} can be applied.

\begin{proof}
We will prove the proposition for the case when there is only one
periodic parabolic domain $U$ (in which case it must be invariant);
in the general case the arguments are similar.
Recall that by \cite{ry08} the boundary of $U$ is a Jordan curve.
Let $\psi:\ol\disk\to\ol U$ be a homeomorphism.
Consider the map $h:\uc\to \uc$ given by $h=\psi^{-1}\circ P\circ\psi$.

We claim that all fixed points of $h$ are weakly repelling.
Indeed, let $u$ be a fixed point of $h$; then $x=\psi(u)$ is a fixed point of $P$.
Let $P_x^{-1}$ be a local inverse of $P$ near $x$, and $W$ a repelling petal at $x$.
Then $P_x^{-1}(W)\subset W$, and $P_x^{-n}(W)$ converges to $x$ (here $P_x^{-n}$ is the $n$th iterate of $P_x^{-1}$).
Set $A=\psi^{-1}(\ol W\cap\bd(U))$.
Then $A$ contains a neighborhood of $u$ in $\uc$, we have $h_u^{-1}(A)\subset A$, and $h_u^{-n}(A)$ converges to $u$,
where $h_u^{-1}$ is a local inverse of $h$ near $u$, and $h_u^{-n}$ is the $n$th iterate of it.
It follows that $u$ is weakly repelling in $\uc$.

Consider the map $H:\ol\disk\to\ol\disk$ given by $w\mapsto |w|^2 h(w/|w|)$ for $w\ne 0$;
set $H(0)=0$ by continuity.
It is easy to see that all fixed points of $h$ in $\uc$ are weakly repelling for $H$.
Set $\widehat{P}=\psi\circ H\circ\psi^{-1}$ on $U$ and $\widehat{P}=P$ everywhere else.
\end{proof}

Theorem \ref{t:1a-new} strengthens Theorem \ref{t:gm1a}; recall that
by Theorem \ref{t:gm1a} there is a unique rotational object of $P$ in any component of $\C\sm \Sigma$.

\begin{thm}
  \label{t:1a-new}
Let $A$ be a component of $\C\sm \Sigma$, let $F\subset A\cup \fix$ be a non-degenerate continuum,
and let $P(F)\sm F$ be disjoint from $A$. Then the rotational object of $P$ in $A$ is contained in $\thu(F)$.
\end{thm}

\begin{proof}
  Apply Theorem \ref{t:7.4.7} to the map $f=\hp$ and the continuum $X=\thu(F)$.
Suppose that $X$ does not contain a fixed rotational point of $f$.
Then the assumptions of Theorem \ref{t:7.4.7} are satisfied.
The conclusion is that $X$ is a point, a contradiction.
Therefore, there is a fixed rotational point of $f$ in $X$.
This point is either a fixed rotational point of $P$, or the superattracting fixed point created in a parabolic domain of $P$.
In both cases, we obtain the desired.
\end{proof}

\section{Locally fully invariant continua}
\label{s:loc-max}
A continuum $Y\subset J_P$ is said to be \emph{locally fully invariant} (LFI) if $Y$ is a component of $P^{-1}(Y)$.
In this case, as is easy to see, $P(Y)=Y$, i.e., the set $Y$ is forward invariant.
Conceptually, LFI continua are related to the \emph{locally maximal} invariant sets (cf \cite{akk81} or even
\cite{sha66}).
Given an LFI set $Y$ let $\nu_Y(z)$ be the number of all $P$-preimages of $z$ in $Y$ counted with multiplicities.
The following lemma is a particular case of \cite[Lemma 4.1]{bopt16a} .

\begin{lem}[Lemma 4.1 \cite{bopt16a}]\label{l:4.1}
If $Y$ is locally fully invariant, then $\nu_Y$ is constant on $Y$.
\end{lem}

This constant is called the \emph{degree} of $Y$.
%Every forward invariant continuum is contained in an LFI continuum, as follows from the following lemma.

\begin{lem}
  \label{l:finv}
  Let $T\subset J_P$ be a continuum such that $P(T)=T$.
Then there is a minimal by inclusion LFI continuum $Y\supset T$.
Every periodic cutpoint of $Y$ is also a cutpoint of $T$.
\end{lem}

The LFI continuum $Y$ is called the \emph{LFI hull of $T$}.

\begin{proof}
Define a family of continua $T_\al$ indexed by ordinals $\al$ by transfinite induction.
Set $T_0=T$ and $T_{\al+1}$ to be the component of $P^{-1}(T_\al)$ %including
containing $T$.
Finally, if $\al$ is a limit ordinal, let $T_\al$ be the closure of the union of all $T_\be$ with $\be<\al$.
Then $T_\be\subset T_\al$ whenever $\al<\be$.
It is clear that the transfinite sequence $T_\al$ stabilizes at an ordinal of cardinality at most that of the continuum.
That is, there is an ordinal $\al_0$ such that $T_\al=T_{\al_0}$ for all $\al>\al_0$.
Then $T_{\al_0}$ is the desired LFI continuum.

Now consider a periodic cutpoint $z$ of $Y$.
Replacing $P$ with a suitable iterate, we may assume that $P(z)=z$.
Let $W$ be the wedge formed by two adjacent external rays of $P$ landing at $z$ such that $T\cap W\ne\0$.
If $z$ is not a cutpoint of $T$, then $T\subset W$.
Then $T_\al\subset\ol W$ for all $\al$, a contradiction with $z$ being a cutpoint of $Y$.
\end{proof}

Let us study properties of LFI continua.

\begin{lem}
\label{l:lfi-1-1}
Let $Y$ be an LFI continuum for $P$.
If $P:Y\to Y$ is one-to-one, then there are no critical points of $P$ in $\thu(Y)$.
\end{lem}

\begin{proof}
  Suppose that $P:Y\to Y$ is one-to-one but there is a critical point $c$ of $P$ in $\thu(Y)$.
%Then the component of $P^{-1}(Y)$ including $Y$ has several ($>1$) preimages of points in $Y$ close to $P(c)$,
%a contradiction. On the other hand,
First, assume that a critical point $c$ of $P$ is in a bounded complementary component $U$ of $Y$.
Then $U$ is a Fatou component on which $P$ is not 1-to-1, hence,
by the classification of Fatou components, $P$ cannot be 1-1 on $\bd(U)\subset Y$, %again
a contradiction.
%case added
Finally suppose that $c\in Y$.
Choose  $y_i\in Y$ so that $\lim y_i=P(c)$.
Since $Y$ is an LFI continuum, for $i$ large enough, multiple preimages of $y_i$ near $c$ must belong to $Y$, a contradiction.
\end{proof}

Recall the notion of an external map due to Douady and Hubbard \cite{DH-pl} in the polynomial-like setup and
 to Ha\"issinsky \cite{hai98} in a more general setup of \emph{polynomial silhouettes}.

\begin{dfn}[External map]
\label{d:extmap}
  Let $Y$ be a \emph{full} LFI continuum for $P$, and let $\phi:\disk\to\C^*\sm Y$ be a Riemann map.
Since $Y$ is LFI, there exists a neighborhood $U\supset Y$ such that no point of $U\sm Y$ can map into $Y$.
Hence we can choose $\eps>0$ so that the map $F=\phi^{-1}\circ P\circ \phi$ is defined and holomorphic on the
 annulus $A^-_\eps=\{z: 1-\eps<|z|<1\}$.
By the Schwarz reflection principle, extend $F$ to a holomorphic map of the annulus $A_\eps=\{z:1-\eps<|z|<(1-\eps)^{-1}\}$ to $\C$
 preserving $\uc$ and taking $A_\eps$ to another annulus around $\uc$.
The real analytic map $F:\uc\to\uc$ is the \emph{external map} of $Y$ with respect to $P$.
Since $F$ takes $A^-_\eps$ to a subset of the disk $|z|<1$, it has no critical points on $\uc$.
Abusing the terminology, we will sometimes call the analytic continuation of $F$ to $A_\eps$ also the external map.
\end{dfn}

In Lemma \ref{l:no-attr}, we let $Y$ denote an LFI continuum for $P$, and
 $F$ be the external map of $\thu(Y)$.

\begin{lem}
  \label{l:no-attr}
The external map $F:A_\eps\to\C$ has neither attracting periodic points in $\uc$
 nor parabolic periodic points in $\uc$ whose basins intersect $\uc$.
\end{lem}

\begin{proof}
  Assume the contrary: $z\in\uc$ is an attracting fixed point of $F^m$ or a parabolic point whose basin intersects $\uc$.
Then there is a disk $U$ such that $z\in\ol{U\cap\uc}$ and $F^m(U)\subset U$.
It follows that $P^m(V)\subset V$, where $V=\phi(U\cap\disk)$ and $\phi$ is as in Definition \ref{d:extmap}.
In particular, a circle arc containing $z$ is contained in $\ol U$. %$\bd(V)$.
By the Denjoy-Wolff theorem (cf. Theorem 5.4 of \cite{M}), all $P^m$ orbits in $V$ converge to a $P^m$-fixed point $y\in Y$.
Then $y$ is parabolic or attracting, and $V\subset W$ where $W$ is a parabolic or attracting basin of $y$.
In both cases, the parabolic or attracting basins of $y$ are completely inside or completely outside $\thu(Y)$
(because $Y\subset J_P$).
Since %some
points of $V$ do not belong to $\thu(Y)$ by construction, then $W\cap \thu(Y)=\0$.
Thus, $y$ is parabolic. The fact that $Y\subset J_P$ now implies that points of $Y$ close to $y$ are repelled from $y$ under $P^m$.
A contradiction with the fact that $\ol V\cap Y$ is mapped to $\ol V$ under all iterates of $P^m$.
\end{proof}

We can now deduce that $F$ is topologically expanding.

\begin{cor}
\label{c:zk}
Let $Y$ be an LFI continuum for $P$ of degree $k>1$.
Then the external map $F:\uc\to\uc$ of $\thu(Y)$ is topologically conjugate to the $k$-th tupling map $z\mapsto z^k$.
\end{cor}

\begin{proof}
By Lemma \ref{l:no-attr}, it follows from \cite{lyu89, bl89} (see also \cite{mms92}) that for every arc $I\subset \uc$
 there exists a number $n>0$ such that $F^n(I)\cap I\ne \0$.
This, in turn, implies (again together with Lemma \ref{l:no-attr}) that $F|_{\uc}$ is topologically conjugate to the map $z^k$ with appropriate $k$
 (see, e.g., \cite{mr07}).
\end{proof}

Let $Y$ be an LFI continuum for $P$.
Following Douady and Hubbard \cite{DH-pl}, say that $P$ is \emph{polynomial-like} (PL) near $\thu(Y)$
 if $P$ maps some neighborhood of $\thu(Y)$ onto a strictly bigger neighborhood as a branched covering.
In this case, points near of $\C\sm \thu(Y)$ near $\thu(Y)$ are repelled from $\thu(Y)$.

\begin{dfn}\label{d:outward}
Say an LFI set $Y$ contains an \emph{outward} parabolic periodic point (or cycle)
 if $Y$ contains a parabolic periodic point $x$ such that
 the corresponding parabolic Fatou domains (whose points are attracted to the orbit of $x$)
 are otherwise disjoint from $\thu(Y)$.
\end{dfn}

Theorem \ref{t:cmph} is, basically, a folklore result.

%Versions of the following statement are known, see e.g. Theorem B of \cite{bopt16a}.

\begin{thm}[Theorem B \cite{bopt16a}]\label{t:cmph}
If $Y$ is an LFI continuum that contains no outward parabolic cycles, then $P$ is PL near $\thu(Y)$.
\end{thm}

Consider a sequence of sets $E_n$.
Say that $E_n$ \emph{accumulate on} $Y$ if, for every open neighborhood $U$ of $Y$, we have $E_n\subset U$ for large $n$
 (how large may depend on $U$).

\begin{prop}
  \label{p:no-acc}
  Consider an LFI set $Y$ for $P$ of degree $>1$.
  Let $x\in Y$ be a fixed point. %such that $P:Y\to Y$ is not one-to-one.
  Let $E$ be a continuum such that sets $E_n=P^n(E)$ share $x$ with $Y$ and are otherwise disjoint from $Y$.
  If $x$ is not a cutpoint of $Y$, then $E_n$ cannot accumulate in $Y$.
\end{prop}

\begin{proof}
Assume the contrary: $E_n$ accumulate on $Y$.
Let $\phi$ be as in Definition \ref{d:extmap}.
The sets $\phi^{-1}(E_n\sm Y)$ accumulate in $\uc$.
Define $T\subset\uc$ as the set of points $z\in\uc$ such that $\phi^{-1}(E_n\sm Y)$ come arbitrarily close to $z$.
Clearly, $T$ is closed, connected and $F$-invariant, where $F:A_\eps\to\C$ is the external map for $\thu(Y)$.

If $T$ is a singleton, then $T$ attracts some nearby continua. Hence
$T$ is an attracting or parabolic fixed point of $F$. By
Lemma \ref{l:no-attr} it must be a parabolic fixed point of $F$,
and nearby continua must be contained in the corresponding parabolic
domain. However then all points close to these continua must also converge to
$T$ under $F$ while on the $P$-plane there are points close to $E_n\subset J_P$ that escape to infinity, a contradiction.
It follows by Corollary \ref{c:zk} that $T=\uc$.
Consider a pair of different regular (pre)periodic points $a$, $b$ in $Y$.
If there are no outward parabolic points in $Y$, then the existence of $a$ and $b$ follows from
 the Douady--Hubbard straightening theorem, since $P$ is PL near $Y$.
On the other hand, if there are outward parabolic points in $Y$, then choose $a$ and $b$ among the pullbacks of these points.
There are external $K_P$-rays $R_a$, $R_b$ landing at $a$ and $b$, respectively.

We claim that all $E_n\sm Y$ lie in one complementary component $W$ of $Y\cup R_a\cup R_b$.
This follows from the fact that $x$ is not a cutpoint of $Y$.
Thus $\phi^{-1}(E_n\sm Y)\subset\phi^{-1}(W)$.
However, $\phi^{-1}(W)$ is bounded by two topological rays to $\uc$ landing in $\uc$,
 and $\ol{\phi^{-1}(W)}\cap\uc$ is an arc that is a proper subset of $\uc$.
On the other hand, this arc must include $T=\uc$, a contradiction.
\end{proof}

\section{Invariant continua and their cutpoints}\label{s:inv-cont}
This section concludes the proof of the Main Theorem.
\begin{comment}
We want to show that if a
subcontinuum of the connected Julia set is invariant, and a point
is periodic and regular, then it can only be a cutpoint of the subcontinuum
if it is a cutpoint of the entire Julia set. From now on by ``weakly repelling'' we mean
``weakly repelling in a metric $d$ on the plane compatible with the
standard topology''.

\medskip
\end{comment}

\medskip

\noindent{\textbf{More standing notation.}}
Assume that $Q\subset J$ is an invariant continuum, $x\in Q$ is a regular
fixed non-rotational point of $P$. Denote by $W$ the wedge between
invariant $K$-rays $R'$, $R''$ landing at $x$ so that there are no
$K$-rays in $W$ landing at $x$. Assume that the movement within $W$
from $R''$ to $R'$ is in the clockwise direction. Finally, assume that $Q\cap W\ne \0$.

\medskip

%and that there are no
%$K$-rays in $W$ landing at $x$.

\label{s:IU_W}
We want to prove that $Q\cap W$ consists of just one component. %this is impossible.
The idea of the proof is based upon the dynamics of such components.
To study it we need a concept of a \emph{thread}.
Namely, a connected set $E\subset W$ such that $\ol E=E\cup\{x\}$ is called
a \emph{thread}.
A \emph{thread of $Q$} is a thread that is a subset of $Q$.
A thread of $Q$ is contained in a component of $Q\sm \{x\}$ contained in $W$;
by Lemma \ref{l:xisin}, each such component of $Q\sm \{x\}$ is a thread of $Q$.

%It is easy to see that threads of $Q$ are
%linearly ordered with respect to $R'$ and $R''$.
If $E_1$ and $E_2$ are disjoint threads of $Q$, then by Theorem \ref{t:66} there is a
topological ray $T$ that separates $E_1$ from $E_2$ inside $W$; write
$E_1<E_2$ if $T$ separates $E_2$ from $R'$ in $W$.
%In this case, if $E'_1<E'_2$ for some threads $E'_1\subset E_1$ and $E'_2\subset E_2$,
%then $E_1<E_2$. Finally,
Say that $E_1<E_2$ \emph{locally} if $E'_1<E'_2$ for some threads $E'_1\subset E_1$ and $E'_2\subset E_2$.

If $E$ is a thread of $Q$ containing no preimages of $x$, then its
$P$-image $P(E)$ is a thread too (e.g., this is the case if $E$ is
sufficiently small). The set $P(E)$ may also contain smaller threads.
Let us study their location with respect to $E$.
Recall that threads are by definition subsets of $W$ even though $W$ is omitted from the notation and terminology.
Also, recall that a \emph{topological ray to $Q$} is a topological ray disjoint from $Q$ but accumulating in $Q$
 (as the parameter on the ray tends to $0$).

\begin{comment}

 \begin{lem} \label{l:ndthr}
Let $E'$ and $E''$ be threads of $Q$  such that $E'<E''$ locally but $E'\cap E''\ne\0$.
Then there is a topological ray $T$ to $Q$ that is also a thread such that $E'<T<E''$ locally.
Moreover, there is a bounded complementary domain of $\ol E'\cup\ol E''$ that is
 a Fatou component of $P$ including $T$.
\end{lem}

\begin{proof}
By definition of local inequalities between threads, there is a topological ray
$T$ to $Q$ landing at $x$ such that $E'<T<E''$ locally.
Let $V$ be the complementary component of $\ol E'\cup\ol E''$ where $T$ lies.
If $V$ is unbounded, then $T$ can be extended as a ray from infinity to $Q$.
The latter extended ray will separate $E'$ from $E''$ in $W$, a contradiction with $E'\cap E''\ne\0$.
Thus $V$ is bounded. Since no point $x\in J$ belongs to $V$, $V$ is a bounded Fatou component.
\end{proof}

\end{comment}

\begin{lem} \label{l:ndthr}
Let $E'$ and $E''$ be threads of $Q$  such that $E'<E''$ locally but $E'\cap E''\ne\0$.
There are no threads $F$ of $Q$ disjoint from $E'\cup E''$ and such that $E'<F<E''$.
\end{lem}

\begin{proof}
Let $V$ be the complementary component of $\ol E'\cup\ol E''$ containing $F$.
If $V$ is unbounded, then there exists a $E'\cup E''\cup F$-ray $T$ from a point
of $F$ to infinity. Evidently, $F\cup T$ separate $E'$ from $E''$ in $W$, a contradiction with $E'\cap E''\ne\0$.
Thus $V$ is bounded. Since no point $x\in J$ belongs to $V$,  we have a contradiction with
$F\subset V$.
\end{proof}

%The Fatou component $V$ from Lemma \ref{l:ndthr} is called the Fatou component \emph{enclosed} by $E'$ and $E''$
%\emph{around} $T$ (note that, a priori, $V$ may depend on $T$, not only on $E'$ and $E''$).
The following lemma allows to find a ``monotone'' infinite sequence of disjoint threads.

\begin{lem} \label{l:Eseq}
Let $E<F$ be disjoint threads of $Q$ such that $x\notin P(E)$ and $F\subset P(E)$.
Then there exists a thread $T\subset E$ such that all its images
$T<P(T)<\dots$ are pairwise disjoint threads that never contain a preimage of $x$.
\end{lem}

\begin{proof}
Construct a null sequence of threads $E\supset E^1\supset \dots$. For each $i$, choose
$n_i$ such that $E^i,$ $P(E^i),$ $\dots,$ $P^{n_i-1}(E^i)$ are all pairwise disjoint threads not containing
immediate preimages of $x$ while $P^{n_i}(E^i)$ contains an immediate preimage of $x$
(if for some $j$ no image of $E^j$ contains an immediate preimage of $x$, set $T=E^j$).
By continuity and construction, $n_i\nearrow \infty$ as $i\to \infty$.
Let $y\in P^{-1}(x)\cap W$. Suppose that $y\in P^{n_i}(E^i)$. Then by Lemma \ref{l:ndthr} we may assume that
for $j>i$ no image of $E^j$ contains $y$. Hence if $j$ is sufficiently large, immediate preimages of
$x$ do not belong to images of $E^j$. Set $T=E^j$.
\end{proof}

Lemma \ref{l:enowander} describes the dynamics of threads.
%Recall that since
%$P$ and $\hp$ coincide on $J$
%we can use them interchangeably if we talk about their action on $J$
%(in particular, on threads).

\begin{lem}\label{l:enowander}
Suppose that $E$ is a thread of $Q$.
Then its image $P(E)$ cannot contain a thread disjoint from $E$.
\end{lem}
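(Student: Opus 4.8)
The plan is to argue by contradiction using Lemma \ref{l:Eseq}, exactly as in the situation where one has an infinite strictly monotone sequence of threads. Suppose $E$ is a thread of $Q$ and that $P(E)$ contains a thread $E_1$ with $E_1\cap E=\emptyset$. Since $E_1$ and $E$ are disjoint threads landing at $x$, they are linearly ordered; without loss of generality assume $E<E_1$ (the case $E_1<E$ is handled symmetrically, replacing $R'$ by $R''$ throughout). Set $E_0=E$. Then $E_0<E_1$ and $E_1\subset P(E_0)$, so we are precisely in the hypotheses of Lemma \ref{l:Eseq}: choose inductively threads $E_n$ of $Q$ with $E_n\subset P(E_{n-1})$. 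By Lemma \ref{l:Eseq} all the $E_n$ are pairwise disjoint, and $x\notin P(E_n)$ for all sufficiently large $n$.

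Next I would extract a contradiction from the eventual behavior. Once $x\notin P(E_n)$, the thread $E_n$ contains no preimage of $x$, so $\hp(E_n)=P(E_n)$ is again a thread (as observed just before Lemma \ref{l:ndthr}), and it must contain the next thread $E_{n+1}$. Thus for large $n$ the $P(E_n)$ themselves form part of a strictly increasing chain of disjoint threads $\dots<E_n<E_{n+1}<\dots$ all contained in $Q$. In particular this produces a genuinely infinite, strictly monotone, pairwise disjoint family of components of $Q\cap W$: each $E_n$ lies in a component $F_n$ of $Q\cap W$, and since the $E_n$ are disjoint and strictly ordered, the $F_n$ are strictly ordered, hence pairwise distinct. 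So $Q\cap W$ has infinitely many components accumulating (by the order, and since $\diam$ of the separating rays must shrink) onto some thread-limit at $x$.

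The step I expect to be the main obstacle is turning "infinitely many ordered disjoint threads" into an outright contradiction with $Q\subset J$ being a continuum; this is where the genuine content lies. The idea is that the strictly increasing sequence $E_n$ has a "limit thread": the separating topological rays $T_n$ (with $E_n<T_n<E_{n+1}$, furnished by Theorem \ref{t:66}) can be taken with $\diam(T_n\cap U)\to 0$ is false in general, so instead one uses that the sets $\ol{\bigcup_{m\ge n}E_m}$ form a nested sequence of continua (each contains $x$ by Lemma \ref{l:xisin}-type reasoning) whose intersection $E_\infty$ is a continuum containing $x$ with $P(E_\infty)\supset E_\infty$. Applying Lemma \ref{l:ndthr} to consecutive pairs forces bounded Fatou components enclosed between far-apart $E_n$'s, and then, just as in the proof of Lemma \ref{l:Eseq}, Sullivan's No Wandering Domains Theorem together with the fact that the enclosed regions lie in $K$ but their "boundary threads" lie in $J$ yields a Jordan curve in $K$ enclosing some $E_m\subset J$ — the same contradiction. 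Concretely I would not redo that argument but instead observe that the existence of the disjoint ordered sequence $E_0<E_1<\cdots$ with $E_{n+1}\subset P(E_n)$ already supplies, via Lemma \ref{l:ndthr} applied to any two far-apart members that happen to share the point $x$ in their closures, a bounded Fatou component, and then reuse the No-Wandering-Domains contradiction verbatim from Lemma \ref{l:Eseq}'s proof. Hence no such $E_1$ exists, i.e., $P(E)$ cannot contain a thread disjoint from $E$.
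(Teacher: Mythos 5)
There is a genuine gap, and it sits exactly where you flagged it. Your setup (taking $E_0=E$, $E_1\subset P(E_0)$ disjoint from $E_0$, and invoking Lemma \ref{l:Eseq} to get a pairwise disjoint ordered sequence $E_0<E_1<\cdots$ of threads) matches the paper. But your mechanism for extracting a contradiction from that sequence does not work: Lemma \ref{l:ndthr} requires $\ol{E'\cap E''}\ne\0$, i.e.\ the two threads must actually \emph{intersect} at points away from $x$ --- merely sharing $x$ in their closures (which every thread does by definition) is not enough. The conclusion of Lemma \ref{l:Eseq} is precisely that your $E_n$ are pairwise disjoint, so Lemma \ref{l:ndthr} never applies to any pair of them, no enclosed Fatou component is produced, and the Sullivan/No-Wandering-Domains contradiction from the proof of Lemma \ref{l:Eseq} cannot be ``reused verbatim.'' An infinite, ordered, pairwise disjoint family of threads in $Q\cap W$ is not by itself contradictory. (A secondary error: disjoint threads can lie in the same component of $Q\cap W$, so your inference that the $F_n$ are pairwise distinct components is unjustified.)

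What the paper actually does at this point is much heavier. It takes the Hausdorff limit $T$ of the $\ol{E_n}$ (a $P$-invariant continuum containing $x$ and disjoint from all the $E$-sets, since intersection would put $E$-sets into Fatou domains via Lemma \ref{l:ndthr}), saturates it by pullbacks to build an invariant continuum $H$ whose distinguished pullback equals itself, and applies the fixed point theorem (Theorem \ref{t:7.4.7}) to get a rotational fixed point $z\in\thu(H)$. It then rules out every possible type of $z$: a repelling point with nontrivial combinatorial rotation is excluded because the $E$-sets would have to enter a cycle of wedges at $z$ separated from $x$ by external rays; an attracting point is excluded via rays landing on the boundary of the invariant Fatou domain; and the Cremer/Siegel cases are excluded via hedgehogs \cite{per94,per97} when $P|_{\thu(H)}$ is injective, and via Theorem \ref{t:apl} (conjugacy to a polynomial on its Julia set, hence infinitely many repelling periodic points) otherwise. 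None of this machinery appears in your proposal, and your nested-intersection continuum $E_\infty$ is only the first step of it. So the proof is incomplete at its essential step.
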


\begin{proof}
If there is a thread $E_1$ contained in $P(E)$ and disjoint from $E$, and
$E_0$ is a pullback of $E_1$ that is a thread in $E$, then $E_0\cap E_1=\0$.
Assume that $E_0<E_1$ and consider possible location
of sets $E_n=P^n(E_0)$; call them \emph{$E$-sets} for brevity.
By Lemma \ref{l:Eseq}, we may assume that all $E_n$ are disjoint threads of $Q$ not containing preimages of $x$.

Let $T$ be the \emph{topological upper limit} of the sequence of sets $E_n$, i.e., the set of
all limit points of sequences of points $z_i\in E_i, i=1, 2, \dots$.
Clearly, $T$ is closed, $P(T)=T$, and $x\in T$.
Note that $T$ coincides with the union of all Hausdorff limits of sequences of sets $E_{i_n}$ with
$i_n\nearrow \infty$; also, all $E$-sets are disjoint from $T$
by Lemma \ref{l:ndthr}. Let $Y$ be the LFI hull of $T$.
Then $Y$ is disjoint from the $E$-sets because
if, say, $E_n\cap Y\ne\0$,
then, again by Lemma \ref{l:ndthr}, the sets $E_m$ cannot be contained in $J$ for $m>n$.

By Proposition \ref{p:no-acc}, the map $P:Y\to Y$ is one-to-one.
Hence $P:T\to T$ is one-to-one.
By Lemma \ref{l:lfi-1-1}, there are no critical points of $P$ in $\thu(Y)$.
In particular, $\thu(Y)$ cannot include any attracting or parabolic domains.
By Theorem \ref{t:1a-new}, there is a $P$-fixed rotational point $z\in\thu(Y)$.
%(note that $\thu(T)$ cannot include parabolic domains since $P:T\to T$ is one-to-one).
%By Theorem \ref{t:7.4.7} (it applies to $\hp$ and
%$T$ as all periodic points of $J$ are $\hp$-repelling), there
%exists a $\hp$-fixed rotational point $z\in \thu(T)$.
If $z$ is a cutpoint of $T$, then all $E_n$ with large $n$ are in the same wedge bounded by external $P$-rays landing at $z$.
A contradiction.
Hence $z$ is Siegel or Cremer.
Let $O$ be a neighborhood of $\thu(Y)$ such that $\ol O$ contains no critical points.
Then the set of all points whose forward orbits stay forever in $\ol{O}$ contains a component $\tilde Y\supset Y$.
By \cite{per94, per97}, the set $\tilde Y$ is a so-called \emph{hedgehog} of $z$.
It cannot contain fixed points other than $z$, a contradiction with $x\in Y$.
\end{proof}

This implies the following corollary.

\begin{cor}\label{c:all-inv}
If $F$ is a component of $Q\cap W$ then all small threads of $F$ map back to $F$.
Thus, if $F$ contains no immediate preimages of $x$ then $P(F)\subset F$. In particular,
there are at most finitely many components of $Q\cap W$.
\end{cor}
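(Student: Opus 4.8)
The plan is to deduce the corollary directly from Lemma \ref{l:enowander}. Let $F$ be a component of $Q\cap W$. First I would recall that, since $Q$ is a continuum, $F$ is a thread (so $\ol F = F\cup\{x\}$), and that a sufficiently small connected subset $E\subset F$ with $\ol E = E\cup\{x\}$ is again a thread of $Q$ that contains no immediate preimage of $x$ (here one uses that $P^{-1}(x)$ is finite and does not accumulate at $x$, except possibly $x$ itself, together with the local homeomorphism property of $\hp$ near $x$). For such $E$ the image $P(E) = \hp(E)$ is again a thread, by the remark preceding Lemma \ref{l:ndthr}. By Lemma \ref{l:enowander}, $P(E)$ cannot contain a thread disjoint from $E$; hence every thread of $Q$ inside $P(E)$ meets $E$, and in particular $P(E)$ lies in the component of $Q\cap W$ containing $E$, which is $F$. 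Thus all small threads of $F$ map back into $F$.

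Next I would handle the case when $F$ contains no immediate preimage of $x$: then \emph{every} thread $E\subset F$ with $\ol E = E\cup\{x\}$ avoids $P^{-1}(x)$, so the previous paragraph applies to all of them. Since $F\cap W$ (which is all of $F$) is covered by its small threads emanating from $x$ — more precisely, for any $y\in F$ the component of $F\cap D$ through $y$, for a small disk $D$, together with an end segment of a thread joining it to $x$, is a thread contained in $F$ — we get $P(y)\in F$ for every $y\in F$, i.e. $P(F)\subset F$. (One should phrase this via: $P(F)$ is connected, contains $x$, and every small thread of $F$ maps into $F$, so $P(F)\subset \ol{\bigcup\{P(E): E \text{ small thread of } F\}}\subset \ol F$, and since $P(F)\subset J$ and $P(F)\setminus\{x\}\subset W$ locally, $P(F)$ lies in the single component $F$.)

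Finally, for the finiteness statement: the components of $Q\cap W$ are linearly ordered (by the order on threads described before Theorem \ref{t:apl}). If there were infinitely many of them, pick $F_0 < F_1$ with a thread $E_1\subset F_1$ of the form $E_1\subset P(E_0)$ for a thread $E_0\subset F_0$ — such a configuration arises because some component must have its image "jump" over infinitely many others, contradicting the first paragraph which forces images of small threads to stay in their own component; more directly, the first paragraph shows the assignment $F\mapsto (\text{component containing } P(E) \text{ for small } E\subset F)$ is well-defined and, by Lemma \ref{l:Eseq} and Lemma \ref{l:enowander}, no infinite strictly increasing chain of distinct components can be produced under iteration, while an infinite ordered family of components would force such a chain.

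The main obstacle I anticipate is the finiteness clause: making rigorous that infinitely many ordered components of $Q\cap W$ would produce a forbidden configuration of the type ruled out by Lemmas \ref{l:Eseq} and \ref{l:enowander} requires carefully tracking how a single thread's forward orbit distributes among components, and ensuring that the "no wandering thread" conclusion of Lemma \ref{l:enowander} genuinely obstructs an infinite ordered family (as opposed to merely an infinite orbit). The first two clauses, by contrast, are essentially immediate unwindings of Lemma \ref{l:enowander} together with the local structure of threads near $x$.
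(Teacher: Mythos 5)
Your treatment of the first two clauses is correct and is essentially the paper's (the paper dismisses them as ``immediate''): a small thread $E\subset F$ avoiding $P^{-1}(x)$ has image a thread of $Q$, Lemma~\ref{l:enowander} forces $P(E)\cap E\ne\emptyset$, and connectedness puts $P(E)$ in $F$; when $F$ contains no immediate preimage of $x$ the same argument applies to $F$ itself (which is already a thread, since $\ol F=F\cup\{x\}$ and $\bd(W)\cap Q=\{x\}$), giving $P(F)\subset F$ directly and more simply than your covering-by-threads argument.

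The genuine gap is in the finiteness clause, and your proposed mechanism does not work: it is in fact inconsistent with your own first clause. You want to extract from an infinite ordered family of components a configuration $E_0<E_1\subset P(E_0)$ with $E_0\cap E_1=\emptyset$ and then invoke Lemmas~\ref{l:Eseq} and~\ref{l:enowander}. But by the second clause all but finitely many components satisfy $P(F)\subset F$, so iterating a thread never ``jumps'' between components and never produces a strictly increasing chain; the forbidden configuration is precisely what Lemma~\ref{l:enowander} says cannot occur, so no contradiction can be manufactured this way. An infinite family of pairwise disjoint, forward-invariant threads is not excluded by any chain argument. The paper's route is different: it deduces finiteness from the second clause together with the finiteness of $P^{-1}(x)\cap W$ --- that is, a component of $Q\cap W$ which is \emph{not} mapped into itself must contain one of the finitely many immediate preimages of $x$, so at most finitely many components can fail to be invariant, and the invariant ones are then controlled using the weak repulsion at $x$ (an invariant thread cannot lie entirely in a neighborhood where $d(P(y),x)>d(y,x)$, since the point of $\ol F$ farthest from $x$ would have to map inside $F$). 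Your writeup contains neither of these two ingredients, so the finiteness clause remains unproved as written.
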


\begin{proof}
The first claim is immediate. It implies the second one. Since there are at most
finitely many immediate  preimages of $x$ in $W$, the last claim follows.
\end{proof}

We can now prove Lemma \ref{l:747ok} which allows us to use Theorem \ref{t:7.4.7}.

\begin{lem}\label{l:747ok}
If $F$ is a component of $Q\cap W$, then all points of $F$ sufficiently close to $x$ map to $F$.
\end{lem}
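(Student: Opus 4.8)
The plan is to reduce to Corollary~\ref{c:all-inv} and close the gap it leaves. Recall (from the discussion preceding this lemma) that $\hp=P$ on $J$, and that $x$ is a non-rotational weakly repelling fixed point of $\hp$ that is not critical; hence $\hp$ is an orientation-preserving local homeomorphism at $x$ fixing every $K$-ray landing at $x$, so it fixes the two $K$-rays bounding $W$ and carries a neighborhood of $x$ in $W$ into $W$. Since $\hp^{-1}(x)$ is finite, the distance $\delta$ from $x$ to $\hp^{-1}(x)\sm\{x\}$ is positive (if $\hp^{-1}(x)=\{x\}$ then $F$ has no immediate preimage of $x$ and Corollary~\ref{c:all-inv} already gives $\hp(F)\subset F$, so assume $\delta<\infty$). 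First I would fix a round disk $U=\{z:\,|z-x|<\eps\}$ with $\eps<\delta$ so small that $\hp|_U$ is a homeomorphism, $\ol U\subset\hp(U)$, and $\hp(W\cap U)\subset W$; by Corollary~\ref{c:all-inv} the set $Q\cap W$ has finitely many components $F_1,\dots,F_n$, and I write $F=F_k$. It suffices to prove $\hp(y)\in F$ for every $y\in F\cap U$.

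For such $y$ we have $\hp(y)\ne x$ (because $x\notin F$ and $\eps<\delta$), and $\hp(y)\in\hp(W\cap U)\cap\hp(Q)\subset W\cap Q$. Since $\ol{F_i}\sm F_i=\{x\}$, the $K$-rays bounding $W$ miss $Q$, and the $F_i$ are finitely many, each $F_i$ is clopen in $Q\sm\{x\}$, so $W\cap Q=F_1\sqcup\dots\sqcup F_n$ and $\hp(y)\in F_j$ for a unique $j$; the whole point is to show $j=k$. By Theorem~\ref{t:66} the $Q$-rays landing at $x$ separate the $F_i$ from one another; let $A'$, $A''$ be the two $Q$-rays bounding the sector containing $F_k$, so that $\ol{A'}\cup\ol{A''}\cup\{\infty\}$ is a Jordan curve in $\widehat\C$ one of whose complementary disks $D$ satisfies $D\cap Q=F_k$. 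Let $\widetilde D$ be the component of $\hp^{-1}(D)$ whose closure contains $x$; since near $x$ the only branch of $\hp^{-1}$ is the local inverse $g$ fixing $x$, and $g$ maps the part of $W$ near $x$ into $W$, the set $\widetilde D$ agrees near $x$ with $g$ applied to the part of $D$ near $x$, hence is a subwedge of $W$ at $x$. As $\hp(Q)\subset Q$ while $A'$, $A''$ miss $Q$, the boundary $\bd\widetilde D\subset\hp^{-1}(\ol{A'}\cup\ol{A''})$ meets $Q$ only in $\hp^{-1}(x)\cap Q$, so $F_k\cap\bd\widetilde D\subset\hp^{-1}(x)\sm\{x\}$ (as $x\notin F_k$), a set at distance $\ge\delta$ from $x$. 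By Corollary~\ref{c:all-inv} every sufficiently small thread of $F_k$ maps into $F_k\subset D$, hence lies in $\widetilde D$; whereas, if $j\ne k$, then $\hp(y)\in F_j$ is disjoint from $D$, so $y\notin\widetilde D$.

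The step I expect to be the main obstacle is passing from here to a contradiction. If $F_k$ had no immediate preimage of $x$, then $\bd\widetilde D\cap F_k=\emptyset$, so the connected set $F_k$---which meets $\widetilde D$ because it contains threads---would lie entirely in $\widetilde D$, forcing $y\in\widetilde D$ and finishing the proof; this is in effect the second assertion of Corollary~\ref{c:all-inv}. The genuinely new case is that $F_k$ contains such preimages: then $F_k$ may cross $\bd\widetilde D$, but only at those preimages, all at distance $\ge\delta$ from $x$. Hence inside the ball $\{z:\,|z-x|<\delta\}$ the continuum $F_k$ never crosses $\bd\widetilde D$, so each component of $F_k\cap\{|z-x|<\delta\}$ lies either in $\widetilde D$ or in its complement; the components whose closure contains $x$ (the threads) lie in $\widetilde D$, and what must be excluded is that the component $E_y$ of $F_k\cap\{|z-x|<\delta\}$ through a given $y$ near $x$ is an ``outside'' one---that is, that $F_k$ is locally disconnected at $x$ in a way producing pieces $E_y$ arbitrarily close to $x$ with $\hp(E_y)\subset F_j$ for some $j\ne k$. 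I would exclude such pieces by the mechanism of Lemma~\ref{l:enowander}: letting $y\to x$, the forward iterates $\hp^m(E_y)$ of such a piece are pairwise disjoint by Sullivan's No Wandering Domains Theorem (as in Lemma~\ref{l:Eseq}) and, after extracting a Hausdorff limit, yield a $\hp$-invariant continuum through $x$ whose topological hull carries a rotational fixed point by Theorem~\ref{t:7.4.7}; the resulting analysis of rays landing at that point contradicts the iterates of $E_y$ returning arbitrarily close to $x$. With outside pieces excluded, $E_y\subset\widetilde D$, so $\hp(y)\in D\cap Q=F_k$, and this holds throughout a neighborhood of $x$ --- which is the assertion of the lemma.
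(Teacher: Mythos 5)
Your setup is sound and runs parallel to the paper's: reduce via Corollary~\ref{c:all-inv} to the case where $F$ contains immediate preimages of $x$, localize near $x$, and observe that the only danger comes from pieces of $F$ that accumulate at $x$ without containing $x$ in their closure but whose images land in a different component $F_j$. Up to that point the argument (including the $\widetilde D$ construction, modulo the routine check that the wedge between two adjacent $Q$-rays at $x$ meets $Q$ in exactly one component of $Q\sm\{x\}$) is fine.

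The gap is in the step you yourself flag as the main obstacle, and it is not closed by what you write. You propose to exclude an ``outside'' piece $E_y$ by iterating it forward, citing Lemma~\ref{l:Eseq} for pairwise disjointness of the iterates and then extracting a Hausdorff limit ``through $x$''. But Lemma~\ref{l:Eseq}, Lemma~\ref{l:ndthr}, and the entire internal mechanism of Lemma~\ref{l:enowander} (the linear order, the enclosed Fatou components, the limit continuum containing $x$) apply to \emph{threads}, i.e., connected sets $E\subset W$ with $\ol E=E\cup\{x\}$. The pieces $E_y$ you need to kill are precisely \emph{not} threads --- their closures need not contain $x$ --- so none of those statements apply to them or to their forward images, and there is no reason the Hausdorff limit of the iterates of a single such piece contains $x$ or is invariant. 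The concluding sentence about ``the resulting analysis of rays landing at that point'' is not an argument. The paper closes this gap differently and as a one-step reduction to Lemma~\ref{l:enowander} used as a black box: take $y_i\in F$ with $y_i\to x$ and $P(y_i)\notin F$, let $F_{y_i}$ be the component of $F\sm(P^{-1}(x)\cap F)$ containing $y_i$ (so $P(\ol{F_{y_i}})\subset \ol{F'}=F'\cup\{x\}$ for a single component $F'\ne F$ after passing to a subsequence, using finiteness of the set of components), and take the Hausdorff limit $B$ of the sets $\ol{F_{y_i}}$ \emph{themselves}, not of forward iterates. Since $y_i\to x$, the limit $B$ is a continuum containing $x$, hence yields a thread of $F$ whose image contains a thread of $F'$ disjoint from it, contradicting Lemma~\ref{l:enowander} directly. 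Replacing your forward-iteration sketch by this limit-over-$y_i$ argument would complete the proof.
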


\begin{proof}
Suppose that $y\in F$ maps to a component $F'\ne F$ of $Q\cap W$.
By Corollary \ref{c:all-inv}, there are preimages $x'$ of $x$ that belong to $F$.
Let $X'=P^{-1}(x)\cap F$; clearly, $X'$ is finite.
Choose a component $F_y$ of  $F\sm X'$ that contains $y$.
Then $P(F_y)\subset F'$. Hence $P(\ol{F_y})\subset F'\cup x$.
Now, by way of contradiction suppose that there is a sequence $y_i\in F$ converging to $x$ such that $P(y_i)\notin F$.
Passing to a subsequence and using that, by Corollary \ref{c:all-inv}, there are only finitely many components of $Q\cap W$,
we may assume that for some component $F'$ of $Q\cap W$ and for some point $x'\in X'$ the sets $\ol{F}_{y_i}$
converge to a continuum $B\subset \ol{F}$ containing $x$ and $x'$ such that $P(B)\subset \ol{F'}=F'\cup \{x\}$.
However this contradicts Lemma \ref{l:enowander}. Thus, all points of $F$ sufficiently close to $x$ map to $F$.
\end{proof}

We are ready to prove Theorem \ref{t:only1}.

\begin{thm}\label{t:only1}
The set $Q\cap W$ is connected.
\end{thm}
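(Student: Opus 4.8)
The plan is to argue by contradiction and run the machinery assembled in the previous lemmas. Suppose $Q\cap W$ has at least two components. By Corollary \ref{c:all-inv} there are only finitely many components $F_1<F_2<\dots<F_n$ of $Q\cap W$ (with $n\ge 2$), and by Lemma \ref{l:xisin} each $\ol{F_i}=F_i\cup\{x\}$ is a continuum. By Corollary \ref{c:all-inv} again, any component $F_i$ containing no immediate preimage of $x$ satisfies $P(F_i)\subset F_i$; more generally, Lemma \ref{l:747ok} tells us that points of $F_i$ sufficiently close to $x$ map into $F_i$, so $P$ respects the local structure at $x$ along each component. The strategy is to collapse everything except one ``bad'' component down to exit continua and feed the configuration into Theorem \ref{t:main-topol-1} (via $\hp$), then derive a contradiction with Corollary \ref{c:fxpt2} about the uniqueness of the rotational fixed point inside a wedge complementary to the invariant $K$-rays.

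Concretely, first I would fix a small Jordan disk neighborhood $U$ of $x$ with $\hp:U\to\hp(U)$ one-to-one and $U\Subset\hp(U)$, small enough that $\bd(U)\cap\ol W$ is a simple arc and each $F_i$ reaches $\bd(U)$. For the extreme component $F_1$ (the one closest to $R'$), let $M_1$ be the union of $(\thu(Q)\sm W)$, the closure of $F_1$, and all components of $\thu(Q)\cap W$ separated from $R''$ by $F_1$ locally near $x$; set $x_1=x$. For each other non-rotational $P$-fixed point $x_j$ ($j>1$), let $W_j$ be the maximal wedge between $P$-invariant $K$-rays landing at $x_j$ with $x\notin W_j$, and set $M_j=\ol{W_j\cap D}$ where $D$ is a large disk containing $K$. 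Because distinct $M_j$ with $j>1$ are either inside $M_1$ or disjoint from it, we may apply Theorem \ref{t:main-topol-1} with $f=\hp$, $Q_f=\thu(Q)$, the continua $M_i$ as above, and $X=\ol{\thu(Q)\sm\bigcup_i M_i}$: the hypothesis $x_i\notin\ol{f^{-1}(M_i)\sm M_i}$ for $i=1$ holds by Lemma \ref{l:747ok} (points near $x$ in the retained components map into those components, hence not into $M_1$), and for $i>1$ it holds because $x_j$'s own invariant rays bound $M_j$. The conclusion: $X$ contains a rotational $\hp$-fixed point $z'$, which lies in $W$ and is not separated from $x$ by any non-rotational fixed point.

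Now run the symmetric construction with the roles of $R'$ and $R''$ interchanged: let $M_1'$ instead contain $\ol{F_n}$ and the components separated from $R'$ by $F_n$. Since $n\ge 2$, the component $F_1$ now lies in the retained set $X'=\ol{\thu(Q)\sm M_1'\sm\bigcup_{j>1}M_j}$, and $X'$ contains a rotational $\hp$-fixed point $z''$ in $W$, again not separated from $x$ by a non-rotational fixed point. The two retained sets $X$ and $X'$ are separated from each other inside $W$ by the ray separating $F_1$ from $F_n$ (which exists by Theorem \ref{t:66}), so $z'\ne z''$. But Corollary \ref{c:fxpt2} asserts there is at most one rotational fixed point in a complementary component of the union of all invariant $K$-rays and their landing points — and both $z'$ and $z''$ lie in the same such component, namely the one meeting $W$ near $x$. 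This contradiction forces $n=1$, i.e.\ $Q\cap W$ is connected.

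The main obstacle I anticipate is verifying cleanly that the $M_i$ genuinely satisfy all hypotheses of Theorem \ref{t:main-topol-1} simultaneously — in particular that the $M_j$ for $j>1$ are pairwise disjoint from each other and consistently positioned relative to $M_1$, and that $X\cap M_i=\{x_i\}$ for every $i$ (not just $i=1$). This requires knowing that no two non-rotational fixed points' wedges $W_j$ overlap, which should follow from the Goldberg–Milnor partition picture (Theorem \ref{t:gm1a} and the structure of $\Sigma$), but needs the observation that the boundary rays of the $W_j$ are invariant $K$-rays and hence disjoint from the threads and from each other except possibly at common landing points. A secondary subtlety is ensuring the local separation statement ``$F_1$ separates certain components from $R''$ near $x$'' is well-defined and stable as $U$ shrinks; this is where Lemma \ref{l:747ok} and the linear order on components of $Q\cap W$ do the real work, guaranteeing the combinatorial position of each $F_i$ at $x$ is intrinsic.
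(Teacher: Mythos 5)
Your argument reaches the paper's conclusion by the same underlying mechanism (each side of a putative disconnection of $Q\cap W$ forces its own rotational fixed point inside the same Goldberg--Milnor sector, contradicting uniqueness), but the packaging is genuinely different. The paper never sets up the global exit-continua decomposition: it fixes a single component $F$ of $Q\cap W$, lets $A$ be the component of $\C\sm\Sigma$ containing points of $F$ close to $x$ (where $\Sigma$ is the union of all invariant $K$-rays and their landing points), and applies Theorem \ref{t:7.4.7} directly to $\hp(\ol{F\cap A})$, with Lemma \ref{l:747ok} supplying the local invariance at the exit points; the upshot is that \emph{every} component of $Q\cap W$ contains the (unique, by Theorem \ref{t:gm1a} transported to $\hp$) rotational fixed point of $A$, so there can be only one component. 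Your two-sided application of Theorem \ref{t:main-topol-1} buys a cleaner appeal to an already-packaged statement, but at the cost of exactly the bookkeeping you flag, and that bookkeeping is not cosmetic: as written your continua are not pairwise disjoint --- if a non-rotational fixed point $x_j$ lies outside $\ol W$ while $\thu(Q)$ enters $W_j$, then $M_j=\ol{W_j\cap D}$ meets $M_1\supset\thu(Q)\sm W$ with neither containing the other --- and the hypothesis $X\cap M_j=\{x_j\}$ presupposes $x_j\in\thu(Q)$. Both defects are repairable: since $\thu(Q)\subset K$ can cross $\bd(W_j)$ only at $x_j$, the set $\thu(Q)$ enters $W_j$ only if $x_j\in\thu(Q)$, so one retains only those $M_j$ for which this happens and which sit on the retained side of $F_1$ inside $W$; the maximal such wedges are pairwise disjoint by the structure of $\Sigma$. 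The paper's per-component route avoids this pruning entirely by intersecting with the sector $A$ from the outset, which makes the exit continua automatically single fixed points on $\bd(A)$; your route, once the pruning is made explicit, is a valid alternative.
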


\begin{proof}
Let $F$ be a component of $Q\cap W$. Denote the union of all invariant $K$-rays and their landing points by $\Sigma$.
Let $A$ be the component of $\C\sm \Sigma$ that contains points of $F$ close to $x$.
By Theorem \ref{t:1a-new}, the set
%By Lemma \ref{l:747ok} and because of how we, by Proposition \ref{p:adjust},
%adjust $P$ into $\hp$, Theorem \ref{t:7.4.7} applies to $T_F=\hp(\ol{F\cap A})$ and implies that
$\thu(F)$ contains the rotational object of $P$ in $A$.
%a rotational fixed point of $\hp$ in $T$. By Theorem \ref{t:gm1a} and because of how the maps
%$P$ and $\hp$ relate to each other, we see that $F=Q\cap W$.
Since this applies to any component of $Q\cap W$, we conclude that there is just one component, i.e., that $F=Q\cap W$.
\end{proof}

Evidently, Theorem \ref{t:only1} implies Main Theorem.

\end{document}